\newtheorem{definition}{Definition}[section]
\newtheorem{theorem}{Theorem}[section]
\newtheorem{proposition}{Proposition}[section]
\begin{document}
\baselineskip 6mm
\renewcommand{\thesection}{\arabic{section}}
\renewcommand{\theequation}{\thesection.\arabic{equation}}
\numberwithin{equation}{section}
\title{Mechanical Proving the Symplecticity of Partitioned Runge--Kutta Methods for Determinate and Stochastic Hamiltonian Systems\footnote{Supported by Youth Foundation of Shandong Natural Science Foundation. (No. ZR2024QA201)}}
\author{Xiaojing Zhang
\footnote{202314122@sdtbu.edu.cn, School of Mathematics and Information Science, Shandong Technology and Business University, 264005, Yantai, Shandong, PR China}}

\date{}
\maketitle
\begin{abstract} We propose a new method to prove the partitioned Runge--Kutta methods with symplectic conditions  for determinate  and stochastic Hamiltonian systems are symplectic. We utilize Gr\"obner basis technology which is the one of symbolic computation method based on computer algebra theory and geometrical mechanical proving theory. In this approach, from determinate Hamilton's equations, we get the relations of partial differentials which are regarded as polynomials of plenty variables marked indeterminates. Then, we compute the Gr\"obner basis of above polynomials, and the normal form of symplectic expression, which is as the middle expression, with respect to the Gr\"obner basis. Then, we compute the Gr\"obner basis of symplectic conditions and the normal form of the middle expression with respect to above Gr\"obner basis, and get that the normal form is zero, which complete the proof. We also develop this procedure to the stochastic Hamiltonian systems case and get similar result. In this paper, the new try provide us a new idea to prove the structure-preservation laws of another numerical methods, including the energy conservation law, the momentum conservation law and so on.

{\flushleft $Keywords:$ }  partitioned symplectic Runge-Kutta methods; symbolic computation; mechanical theorem-proving; Hamiltonian systems.
\end{abstract}

\section{Introduction} \setcounter{equation}{0}\hskip\parindent
Hamilton's equations were first found by William Rowan Hamilton in 1834, and appears frequently among the thousands of equations. Many physics equations can be change to the form of  Hamiltonian systems, for example, wave equation, schr\"odinger equation, maxwell equation, which describe physic phenomenon such as the trajectory of electrons within an atom,  the disordered and rapid thermal motion of gas molecules. The Hamiltonian provide a unified mathematical tool to describe those seemingly distinct physic systems. As a mathematical description of system energy, the Hamiltonian not only offers a concise expression of dynamic equations but also reveals the deep symmetries and conservation laws of complex physical systems. One of the conservation laws is symplectic structure conservation for Hamiltonian systems, because the flow of Hamiltonian systems is a symplectic transformation\cite{geometric}. The symplectic structure-preserving numerical methods for Hamiltonian systems play an important role in inheriting intrinsic property of Hamiltonian systems. Due to the superiority of symplectic methods in the long-time numerical simulation, lots of scholars have begun to focus on the construction of symplectic methods for Hamiltonian systems\cite{feng,ruth}. The symplectic Runge--Kutta methods are frequently applied to solve the numerical solution of the Hamiltonian systems, which were first studied by Sanz-Serna, Suris and Lasagni\cite{sanz,suris,lasagni}. 
 
An important class of symplectic partitioned Runge--Kutta(PRK) methods is the Lobatto IIIA-IIIB pair, first discussed by Sun \cite{sungeng}.  We have the conclusion that the PRK methods for determinate Hamiltonian ODEs are symplectic iff the coefficients satisfy the symplectic conditions.
At the same time, multi-symplectic PRK methods for Hamiltonian PDEs are studied by Hong for
the first time\cite{hong}.
Recently, the stochastic Hamiltonian systems has been widely concerned, since random effects are needed to take
into account when stochasticity occurs from internal and external disturbances on the differential equations. The stochastic Hamiltonian systems also have an intrinsic property which is the preservation of the symplectic structure\cite{milstein,milstein2}. Many researchers develop the symplectic methods and its convergence analysis for stochastic Hamiltonian systems\cite{WLJ,hong2,hong3}. For stochastic Hamiltonian systems with multiplicative noise, the symplectic conditions for Runge--Kutta methods was first discovered by Ma et al.\cite{maq,maq2}. 

Due to the complexity of the PRK, the proof of symplecticity involves plenty of equations and complicated substituting. If there has a  way to accomplish the proof by computer, it will avoid the complexity. In this paper, we propose new proofs of the symplectic conditions for partitioned Runge--Kutta method for determinate and stochastic Hamiltonian systems, where we utilize Gr\"obner basis technology in symbolic computation theory. This new approach is established based on geometrical mechanical proving theory , which change the process of proving by words to proving by computer. Thus, we simplify the proof in \cite{maq,maq2}. As for the background of mechanical proving, the mechanical proving is based on algorithmic algebra theory refer to \cite{Mishra,cox,chen1}. In 2006, Bruno Bucheberger first proposed a new algorithm for finding the basis elements of the residue class ring of a zero-dimensional polynomial ideal, which is called the Gr\"obner basis algorithm later\cite{Buch}. The mathematician Wenjun Wu first proposed the character set theory which expanded the theory of mechanical proving\cite{wu,wu2,wu3}. In this paper, the new try provide us a new idea to prove the structure-preservation laws of another numerical methods, including the energy conservation law, the momentum conservation law and so on.

The present paper is organized as follows. In the second section, we introduce some definitions and conclusions about symplectic conditions of PRK methods for determinate and stochastic Hamiltonian systems. We also give some definitions and propositions about the Gr\"obner basis and its property in section 3.  The most important part in this research is the mechanical proving the symplecticity of PRK methods, which are given in the fourth section. The fifth section is the conclusions and prospects of this article.

\section{Symplectic Conditions for PRK Method}

\subsection{PRK Method for Determinate Hamiltonian Systems}

 \qquad In this section, we list some theorems and definitions to introduce some primary notations about symplectic conditions for PRK method for determinate Hamiltonian systems.

  We consider determinate Hamiltonian systems as follows,
\begin{equation}\label{dHs}
\begin{aligned}
&\dot{p}(t)=-H_q(p(t),q(t)),\quad t\in [ 0,T], \\
&\dot{q}(t)=H_p(p(t),q(t)),\quad t\in [ 0,T],
\end{aligned}
\end{equation}
where $p,q$ are  $\mathbb{R}$-valued, and $H: \mathbb{R}^2\rightarrow \mathbb{R}$ is a sufficiently smooth function. The equivalent form is
\begin{equation*}
y=\left(\begin{array}{c}
p\\
q
\end{array}\right)
,\qquad \dot{y}=J^{-1} \nabla H(y),
\end{equation*}
where 
\begin{equation*}
J=\left[\begin{array}{cc}
0 & 1\\
-1 & 0
\end{array}\right]
\end{equation*}

The symplectic property of Hamiltonian systems is described as follows:
\begin{definition}\cite[Definition VI 2.2]{geometric}
A differentiable map $g: U\rightarrow\mathbb{R}^2$(where $U\subset \mathbb{R}^2$ is an open set) is called {\bf symplectic} if the Jacobian matrix $g'(p,q)$ is everywhere symplectic, i.e., if
\begin{equation*}
g'(p,q)^\top J g'(p,q)=J.
\end{equation*}
\end{definition}

\begin{theorem}\cite[Theorem VI 2.4]{geometric}
Let $H(p,q)$ be a twice continuously differentiable function on $U\subset\mathbb{R}^2$. Then, for each fixed $t$, the flow $\phi_t$ is a symplectic transformation wherever it is defined.
\end{theorem}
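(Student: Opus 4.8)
The plan is to differentiate the flow with respect to its initial value and track how the bilinear form defined by $J$ evolves along trajectories. Fix an initial point $y_0=(p_0,q_0)^\top\in U$ and, for $t$ in the maximal interval of existence, set $\Psi(t):=\phi_t'(y_0)$, the Jacobian of the flow map with respect to the initial data. Since $\phi_0=\mathrm{id}$ we have $\Psi(0)=I$, and hence $\Psi(0)^\top J\,\Psi(0)=J$. Because $H\in C^2$, the vector field $y\mapsto J^{-1}\nabla H(y)$ is $C^1$, so by the standard theory of ODEs the flow is continuously differentiable in $y_0$ and one may differentiate the identity $\tfrac{d}{dt}\phi_t(y_0)=J^{-1}\nabla H(\phi_t(y_0))$ with respect to $y_0$, interchanging $\partial_t$ and $\partial_{y_0}$. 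This yields the variational equation
\begin{equation*}
\dot\Psi(t)=J^{-1}\nabla^2 H\big(\phi_t(y_0)\big)\,\Psi(t),
\end{equation*}
where $\nabla^2 H$ denotes the Hessian of $H$, which is symmetric because $H$ is twice continuously differentiable.

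Next I would show that $t\mapsto\Psi(t)^\top J\,\Psi(t)$ is constant. Differentiating and substituting the variational equation,
\begin{equation*}
\frac{d}{dt}\big(\Psi^\top J\,\Psi\big)=\dot\Psi^\top J\,\Psi+\Psi^\top J\,\dot\Psi
=\Psi^\top(\nabla^2 H)^\top(J^{-1})^\top J\,\Psi+\Psi^\top J J^{-1}\nabla^2 H\,\Psi .
\end{equation*}
Using $J^\top=-J$ and $J^2=-I$ one computes $(J^{-1})^\top J=-I$ and $JJ^{-1}=I$, so the right-hand side collapses to $-\Psi^\top(\nabla^2 H)^\top\Psi+\Psi^\top\nabla^2 H\,\Psi$, which vanishes by the symmetry of $\nabla^2 H$. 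Hence $\Psi(t)^\top J\,\Psi(t)=\Psi(0)^\top J\,\Psi(0)=J$ for every admissible $t$; that is, $\phi_t'(p,q)^\top J\,\phi_t'(p,q)=J$, which is exactly the statement that $\phi_t$ is symplectic wherever it is defined.

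The matrix manipulations with $J$ are routine; the one point that needs care — and the main technical obstacle — is the regularity input, namely that the flow is $C^1$ in the initial data and that $\partial_t$ and $\partial_{y_0}$ may be exchanged, which is precisely why the hypothesis $H\in C^2$ (equivalently, a $C^1$ Hamiltonian vector field) is imposed. Once that is granted, the variational equation is legitimate and the rest is the short computation above. One could alternatively phrase the same argument with differential forms: writing $X_H$ for the Hamiltonian vector field and $\omega=dp\wedge dq$, Cartan's formula gives $\mathcal{L}_{X_H}\omega=d(\iota_{X_H}\omega)=d(dH)=0$, whence $\phi_t^*\omega=\omega$; I would present the matrix version here since it matches the notation $g'(p,q)^\top J g'(p,q)=J$ of the symplecticity definition given above.
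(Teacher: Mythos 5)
Your proof is correct: the variational equation $\dot\Psi=J^{-1}\nabla^2H(\phi_t(y_0))\Psi$ together with $(J^{-1})^\top J=-I$ and the symmetry of the Hessian does give $\frac{d}{dt}\bigl(\Psi^\top J\Psi\bigr)=0$, hence $\Psi(t)^\top J\Psi(t)=J$. The paper itself offers no proof of this statement --- it is quoted verbatim from the cited reference \cite[Theorem VI.2.4]{geometric} --- and your argument is essentially the standard one given there, so there is nothing to reconcile.
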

Let $h=T/N$ and $t_n=nh$ be the grid node in $[ 0,T]$, and $p^n\approx p(t_n)$ be the numerical approximation of $p(t_n)$. The partitioned Runge--Kutta methods are as follows.
\begin{definition}\cite[Definition II 2.1]{geometric}
Let $b_i,\ a_{ij}$ and $\hat{b}_i,\ \hat{a}_{ij}$ be the coefficients of two $s$-staged Runge--Kutta methods, An {\bf $s$-staged partitioned Runge--Kutta method} for  \eqref{dHs} is given by
\begin{equation}\label{bpRK}
\begin{aligned}
&p^{n,i}=-H_q(p^{n-1}+h\sum_{j=1}^{s}a_{ij}p^{n,j},\ q^{n-1}+h\sum_{j=1}^{s}\hat{a}_{ij}q^{n,j}),\quad i=1,2,\dots,s,\\
&q^{n,i}=H_p(p^{n-1}+h\sum_{j=1}^{s}a_{ij}p^{n,j},\ q^{n-1}+h\sum_{j=1}^{s}\hat{a}_{ij}q^{n,j}),\quad i=1,2,\dots,s,\\
&p^{n}=p^{n-1}+h\sum_{i=1}^{s}b_i p^{n,i},\\
&q^{n}=q^{n-1}+h\sum_{i=1}^{s}\hat{b}_i q^{n,i},\\
\end{aligned}
\end{equation}
where $p^{n,i},q^{n,i}$ are the middle value between $p^{n-1}$ and $p^n$, $q^{n-1}$ and $q^n$ respectively.
\end{definition}
A numerical method is called symplectic if it can preserve the symplectic structure of Hamiltonian system.
\begin{definition}\cite[Definition VI 3.1]{geometric}
A numerical one-step method is called {\bf symplectic} if the one-step map
\begin{equation*}
y_{n+1}=\phi_h(y_n)
\end{equation*}
is symplectic whenever the method is applied to a smooth Hamiltonian system.
\end{definition}
The symplectic conditions for partitioned Runge--Kutta methods are some relations of coefficients, which are given in the following theorem.
\begin{theorem}\cite[Theorem VI 4.6]{geometric}
If the coefficients of PRK methods \eqref{bpRK} satisfy
\begin{align}
&b_i-\hat{b}_i=0,\quad i=1,2,\dots,s,\label{sc1}\\
& b_i\hat{a}_{ij}+\hat{b}_ja_{ji}-b_i\hat{b}_j=0,\quad i,j=1,2,\dots,s,\label{sc2}
\end{align}
then it is symplectic.
\end{theorem}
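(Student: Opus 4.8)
The plan is to verify directly that the one-step map $\phi_h:(p^{n-1},q^{n-1})\mapsto(p^{n},q^{n})$ defined by \eqref{bpRK} preserves the differential two-form $\omega=dp\wedge dq$; in this scalar setting this is exactly the condition $g'^{\top}Jg'=J$, so symplecticity follows. I would first name the internal stage arguments $P_i:=p^{n-1}+h\sum_{j}a_{ij}p^{n,j}$ and $Q_i:=q^{n-1}+h\sum_{j}\hat a_{ij}q^{n,j}$, so that $p^{n,i}=-H_q(P_i,Q_i)$ and $q^{n,i}=H_p(P_i,Q_i)$, and regard $p^{n-1},q^{n-1}$ as the independent variables with everything else a smooth function of them. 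Then I would compute the exterior derivatives $dp^{n,i},dq^{n,i},dP_i,dQ_i$ and assemble $dp^{n}\wedge dq^{n}$.

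The computation proceeds in three steps. First, expanding
\[
dp^{n}\wedge dq^{n}=\Bigl(dp^{n-1}+h\textstyle\sum_i b_i\,dp^{n,i}\Bigr)\wedge\Bigl(dq^{n-1}+h\sum_i\hat b_i\,dq^{n,i}\Bigr)
\]
produces the leading term $dp^{n-1}\wedge dq^{n-1}$, two mixed terms of order $h$, and a double sum of order $h^{2}$. Second, I would eliminate $dp^{n-1},dq^{n-1}$ from the mixed terms using $dp^{n-1}=dP_i-h\sum_j a_{ij}\,dp^{n,j}$ and $dq^{n-1}=dQ_i-h\sum_j\hat a_{ij}\,dq^{n,j}$; this rewrites the mixed terms as $h\sum_i\hat b_i\,dP_i\wedge dq^{n,i}+h\sum_i b_i\,dp^{n,i}\wedge dQ_i$ together with additional $O(h^{2})$ contributions. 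Collecting all $h^{2}$ terms and relabeling summation indices gives $h^{2}\sum_{i,j}\bigl(b_i\hat b_j-b_i\hat a_{ij}-\hat b_j a_{ji}\bigr)\,dp^{n,i}\wedge dq^{n,j}$, which vanishes by \eqref{sc2}.

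Third, I would treat the surviving $O(h)$ terms. Differentiating the stage equations gives $dq^{n,i}=H_{pp}\,dP_i+H_{pq}\,dQ_i$ and $dp^{n,i}=-H_{qp}\,dP_i-H_{qq}\,dQ_i$, with all Hessian entries evaluated at $(P_i,Q_i)$. Wedging, $dP_i\wedge dq^{n,i}=H_{pq}\,dP_i\wedge dQ_i$ and $dp^{n,i}\wedge dQ_i=-H_{qp}\,dP_i\wedge dQ_i$, so the $O(h)$ part equals $h\sum_i(\hat b_i-b_i)H_{pq}(P_i,Q_i)\,dP_i\wedge dQ_i$, where I have used the symmetry $H_{pq}=H_{qp}$ of the Hessian. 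This vanishes by \eqref{sc1}. Hence $dp^{n}\wedge dq^{n}=dp^{n-1}\wedge dq^{n-1}$, and the method is symplectic.

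The main obstacle is purely the bookkeeping: keeping the antisymmetry of the wedge product and the index relabelings in the double sums consistent so that the combination $b_i\hat a_{ij}+\hat b_j a_{ji}-b_i\hat b_j$ appears in precisely the form \eqref{sc2}. The only genuinely essential analytic input is the equality of mixed second partials $H_{pq}=H_{qp}$, which is what makes the $O(h)$ terms collapse; it uses that $H$ is twice continuously differentiable. The same argument carries over verbatim to the vector-valued case by replacing $dp\wedge dq$ with $\sum_{k}dp_k\wedge dq_k$, and the present computation is exactly the skeleton that the subsequent Gr\"obner-basis approach will mechanize.
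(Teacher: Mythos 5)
Your argument is correct, but it takes a genuinely different route from the paper. You give the classical exterior-calculus proof (essentially the one in Hairer--Lubich--Wanner): expand $dp^{n}\wedge dq^{n}$, eliminate $dp^{n-1}$ and $dq^{n-1}$ in favour of $dP_i$ and $dQ_i$, observe that the $O(h^{2})$ double sum carries the factor $b_i\hat b_j-b_i\hat a_{ij}-\hat b_j a_{ji}$ and vanishes by \eqref{sc2}, while the $O(h)$ part carries $(\hat b_i-b_i)H_{pq}(P_i,Q_i)$ and vanishes by \eqref{sc1} together with the symmetry of the Hessian; I checked the index relabelings and signs and they are consistent. The paper instead mechanizes the verification: it records the chain-rule identities \eqref{31} for the entries of $\partial(p^n,q^n)/\partial(p^{n-1},q^{n-1})$ as a linear polynomial system in those derivatives regarded as indeterminates, computes a Gr\"obner basis $G$ of that system in \textsc{Maple}, reduces the residual $-\frac{d q^n}{d q^{n-1}}\frac{d p^n}{d p^{n-1}}+\frac{d p^n}{d q^{n-1}}\frac{d q^n}{d p^{n-1}}+1$ to a rational normal form \eqref{md} in the coefficients $a_{ij},\hat a_{ij},b_i,\hat b_i$, and then reduces numerator and denominator modulo the Gr\"obner basis of \eqref{sc1}--\eqref{sc2}, obtaining $0/1$. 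Each approach buys something: yours is human-readable, works for every stage number $s$ and for vector-valued $p,q$, and isolates the one analytic ingredient $H_{pq}=H_{qp}$; the paper's is a machine-checkable certificate, which is its stated purpose, but as presented it is only executed for $s=2$, so your proof is actually the more general one. Two points of contact are worth noting: the denominator $Poly+1$ in \eqref{md} is exactly the solvability condition for the implicit stage equations, i.e.\ the implicit-function-theorem hypothesis that you (like the paper) invoke tacitly when declaring the stages to be smooth functions of $(p^{n-1},q^{n-1})$ for small $h$; and your reduction of symplecticity to conservation of $dp\wedge dq$ is the same reduction the paper performs when it observes that only the single entry \eqref{32} of $g'^{\top}Jg'-J$ needs to be checked.
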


\subsection{Partitioned Runge--Kutta Method for Stochastic Hamiltonian Systems}\par
\qquad We consider stochastic Hamiltonian systems as follows,
\begin{equation}\label{sHs}
\begin{aligned}
&dp=-H_q(p,q)dt-\tilde{H}_q(p,q)\circ dB(t),\ \  p(t_0)=p_0,\\
&dq=H_p(p,q)dt+\tilde{H}_p(p,q)\circ dB(t),\ \ q(t_0)=q_0,
\end{aligned}
\end{equation}
where $H,\tilde{H}$ are $\mathbb{R}$-valued sufficiently smooth functions,  $B(t)$ is a standard one-dimensional Brownian motion, and the small circle '$\circ$' before $dB(t)$ denotes stochastic differential equations with Stratonovich type. 
\begin{theorem}\cite[theorem 3.3]{WLJ}
Stochastic Hamiltonian systems preserve symplectic structure, that is 
\begin{equation}
dp(t)\wedge dq(t)=dp_0\wedge dq_0, \quad \forall t\geq 0, a.s.\label{wedge}
\end{equation}
At the same time, equation \eqref{wedge} is satisfied if and only if 
\begin{equation}
B(t)^\top JB(t)=J, \quad \forall t\geq 0, a.s.
\end{equation}
where
\begin{equation*}
B(t):=\frac{\partial (p(t),q(t))}{\partial (p_0,q_0)}.
\end{equation*}
\end{theorem}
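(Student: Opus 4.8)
The plan is to establish both the forward claim (symplecticity of the stochastic flow) and the stated equivalence by reducing everything to one short computation with the variational equation written in Stratonovich form. First I would record the elementary two-dimensional identity that for every $2\times 2$ matrix $M$ one has $M^\top J M=(\det M)\,J$, together with the change-of-variables formula $dp(t)\wedge dq(t)=\det\!\big(\partial(p(t),q(t))/\partial(p_0,q_0)\big)\,dp_0\wedge dq_0=(\det B(t))\,dp_0\wedge dq_0$. Combining the two, $B(t)^\top J B(t)=J$ is equivalent to $\det B(t)=1$, which is equivalent to $dp(t)\wedge dq(t)=dp_0\wedge dq_0$. This settles the ``if and only if'' assertion and reduces the whole theorem to proving $B(t)^\top J B(t)=J$ almost surely.

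For the latter I would first rewrite \eqref{sHs} in the compact form $dy=J^{-1}\nabla H(y)\,dt+J^{-1}\nabla\tilde{H}(y)\circ dB(t)$ with $y=(p,q)^\top$, and then differentiate with respect to the initial value $(p_0,q_0)$. Since the equation is of Stratonovich type, the usual chain and product rules apply, and $B(t)$ satisfies the linear variational equation
\begin{equation*}
dB(t)=J^{-1}\nabla^2 H(y(t))\,B(t)\,dt+J^{-1}\nabla^2\tilde{H}(y(t))\,B(t)\circ dB(t),\qquad B(0)=I,
\end{equation*}
where $\nabla^2 H$ and $\nabla^2\tilde{H}$ denote the symmetric Hessian matrices of $H$ and $\tilde{H}$ evaluated along the solution.

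Next I would apply the Stratonovich product rule to $Q(t):=B(t)^\top J B(t)$; because there is no It\^o correction term in the Stratonovich calculus, this is simply $dQ=(dB)^\top J B+B^\top J\,dB$. Substituting the variational equation, transposing, and using the symmetry of the Hessians together with the matrix identities $J^\top=-J$ and $J^{-1}=-J$, hence $(J^{-1})^\top J=-I$ and $JJ^{-1}=I$, the drift contributions give $-B^\top\nabla^2 H\,B\,dt+B^\top\nabla^2 H\,B\,dt=0$ and the diffusion contributions cancel in exactly the same way, so $dQ\equiv 0$. Therefore $Q(t)=Q(0)=I^\top J I=J$ for all $t$, which yields \eqref{wedge}; since both sides of \eqref{wedge} are continuous in $t$, the exceptional null set may be taken independent of $t$ by first arguing on a countable dense set of times and then passing to the limit.

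I expect the only genuine obstacle to be the analytic justification rather than the algebra: one must know that the stochastic flow of \eqref{sHs} is (almost surely) differentiable in the initial data and that this differentiation may be interchanged with the Stratonovich integral, so that the variational equation above is legitimate — this is where the smoothness hypotheses on $H$ and $\tilde{H}$ and standard SDE regularity theory enter. Once the variational equation is in hand, the cancellation leading to $dQ\equiv 0$ is a routine two-line computation, and the passage back to \eqref{wedge} is pure linear algebra in dimension two.
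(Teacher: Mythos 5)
Your argument is correct, but note that the paper itself offers no proof of this statement: it is quoted verbatim from the cited reference (Wang's thesis, Theorem 3.3), and your reduction --- the $2\times 2$ identity $M^\top J M=(\det M)J$ to get the equivalence, followed by the Stratonovich variational equation and the cancellation $(J^{-1})^\top J=-I$, $JJ^{-1}=I$ showing $d\bigl(B^\top J B\bigr)=0$ --- is exactly the standard proof found in that literature. The only blemish is presentational, inherited from the paper's own notation: you use $B(t)$ simultaneously for the Jacobian $\partial(p(t),q(t))/\partial(p_0,q_0)$ and for the driving Brownian motion, so your displayed variational equation literally reads $dB=\cdots B\circ dB$; rename one of the two before this is written up.
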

A numerical method is called symplectic if it can preserve the symplectic structure of stochastic Hamiltonian systems.
\begin{definition}\cite{milstein2}
A numerical one-step method $ (p^{n-1},q^{n-1})\mapsto (p^{n},q^{n})$ applied to \eqref{sHs} is called {\bf symplectic} if we have
$$dp^{n}\wedge dq^{n}=dp^{n-1}\wedge dq^{n-1},\quad\forall n\geq 0.$$
or
\begin{equation}
\frac{\partial(p^n,q^n)}{\partial(p^{n-1},q^{n-1})}^\top J\frac{\partial(p^n,q^n)}{\partial(p^{n-1},q^{n-1})}\\
=J,\quad\forall n\geq 0.
\end{equation}
\end{definition}
\begin{definition}
The {\bf stochastic $s$-staged PRK method} for \eqref{sHs} are as follows.
\begin{equation}\label{spRK}
\begin{aligned}
&p^{n,i}=p^{n-1}-h\sum_{j=1}^{s}a_{ij}H_q(p^{n,j},q^{n,j})- \Delta B_n\sum_{j=1}^{s}\alpha_{ij}\tilde{H}_q(p^{n,j},q^{n,j})\quad i=1,2,\dots,s,\\
&q^{n,i}=q^{n-1}+h\sum_{j=1}^{s}\hat{a}_{ij}H_p(p^{n,j},q^{n,j})+ \Delta B_n\sum_{j=1}^{s}\hat{\alpha}_{ij}\tilde{H}_p(p^{n,j},q^{n,j}) \quad i=1,2,\dots,s,\\
&p^{n}=p^{n-1}-h\sum_{i=1}^{s}b_i H_q(p^{n,i},q^{n,i})- \Delta B_n\sum_{i=1}^{s}\beta_i\tilde{H}_q(p^{n,i},q^{n,i}),\\
&q^{n}=q^{n-1}+h\sum_{i=1}^{s}\hat{b}_i H_p(p^{n,i},q^{n,i})+ \Delta B_n\sum_{i=1}^{s}\hat{\beta}_i\tilde{H}_p(p^{n,i},q^{n,i}),\\
\end{aligned}
\end{equation}
where $\Delta B_n=B(t_n)-B(t_{n-1})$. 
\end{definition}
The symplectic conditions of stochastic PRK methods are as follows.
\begin{theorem}\cite[theorem 3.1]{maq2}
The numerical scheme \eqref{spRK} preserve symplectic structure, i.e., $dp^{n}\wedge dq^{n}=dp^{n-1}\wedge dq^{n-1},$ if the coefficients of \eqref{spRK} satisfy the relations
\begin{equation}\label{spRKsc}
\begin{aligned}
&b_i\hat{b}_j-b_i\hat{a}_{ij}-\hat{b}_j a_{ji}=0,\quad i,j=1,2,\dots,s,\\
&\beta_i\hat{b}_j-\beta_i\hat{a}_{ij}-\hat{b}_j\alpha_{ji}=0,\quad i,j=1,2,\dots,s,\\
&b_i\hat{\beta}_j-b_i\hat{\alpha}_{ij}-\hat{\beta}_j a_{ji}=0,\quad i,j=1,2,\dots,s,\\
&\beta_i\hat{\beta}_j-\beta_i\hat{\alpha}_{ij}-\hat{\beta}_j\alpha_{ji}=0,\quad i,j=1,2,\dots,s,\\
&b_i=\hat{b}_i, \quad \beta_i=\hat{\beta}_i,\quad i=1,2,\dots,s.
\end{aligned}
\end{equation}
\end{theorem}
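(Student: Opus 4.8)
The plan is to mechanize the classical differential-form proof of symplecticity as an ideal-membership check verified by two successive Gr\"obner basis reductions, in the spirit announced in the introduction. First I would take exterior differentials of the scheme \eqref{spRK}, treating $p^{n-1},q^{n-1}$ as the independent variables: abbreviating $u_i=H_q(p^{n,i},q^{n,i})$, $v_i=H_p(p^{n,i},q^{n,i})$, $\tilde u_i=\tilde H_q(p^{n,i},q^{n,i})$ and $\tilde v_i=\tilde H_p(p^{n,i},q^{n,i})$, the chain rule yields $du_i=H_{qp}^{(i)}\,dp^{n,i}+H_{qq}^{(i)}\,dq^{n,i}$ together with the three analogous identities, while the stage and update equations of \eqref{spRK} yield linear relations expressing $dp^{n,i},dq^{n,i},dp^{n},dq^{n}$ through $dp^{n-1},dq^{n-1}$ and the $du_j,dv_j,d\tilde u_j,d\tilde v_j$. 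I would then introduce indeterminates for the method coefficients, for $h$ and $\Delta B_n$, for the Hessian entries $H_{pp}^{(i)},H_{pq}^{(i)},H_{qp}^{(i)},H_{qq}^{(i)},\tilde H_{pp}^{(i)},\dots,\tilde H_{qq}^{(i)}$, and for the wedge products occurring in the expansion (equivalently, for the components of every one-form in the basis $\{dp^{n-1},dq^{n-1}\}$). The symplecticity defect $\mathcal D:=dp^{n}\wedge dq^{n}-dp^{n-1}\wedge dq^{n-1}$ then becomes a single polynomial in these indeterminates.

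Next I would form the ideal $I_{\mathrm H}$ of Hamiltonian-structure relations: the chain-rule and stage/update relations above, the symmetry identities $H_{pq}^{(i)}=H_{qp}^{(i)}$ and $\tilde H_{pq}^{(i)}=\tilde H_{qp}^{(i)}$, and the antisymmetry and bilinearity relations encoding exterior multiplication. Computing a Gr\"obner basis of $I_{\mathrm H}$ under an elimination order that removes the one-forms and Hessian entries first, I would reduce $\mathcal D$ to its normal form $\mathcal D_0$, the \emph{middle expression}. I expect, in agreement with the classical word-based proof, that $\mathcal D_0$ is a linear combination of a small collection of surviving wedge indeterminates in which the coefficients are precisely the left-hand sides of \eqref{spRKsc}: the four bilinear blocks $b_i\hat b_j-b_i\hat a_{ij}-\hat b_j a_{ji}$, $\beta_i\hat b_j-\beta_i\hat a_{ij}-\hat b_j\alpha_{ji}$, $b_i\hat\beta_j-b_i\hat\alpha_{ij}-\hat\beta_j a_{ji}$ and $\beta_i\hat\beta_j-\beta_i\hat\alpha_{ij}-\hat\beta_j\alpha_{ji}$ (coming from the drift--drift, diffusion--drift, drift--diffusion and diffusion--diffusion cross terms respectively), together with the two diagonal blocks $b_i-\hat b_i$ and $\beta_i-\hat\beta_i$.

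Finally I would form the ideal $I_{\mathrm S}$ generated by the symplectic conditions \eqref{spRKsc}, compute its Gr\"obner basis, and take the normal form of $\mathcal D_0$ modulo $I_{\mathrm S}$; since every coefficient appearing in $\mathcal D_0$ is verbatim one of the generators of $I_{\mathrm S}$ (the fifth group of \eqref{spRKsc} supplying $b_i-\hat b_i$ and $\beta_i-\hat\beta_i$), the normal form is $0$, which is exactly the identity $dp^{n}\wedge dq^{n}=dp^{n-1}\wedge dq^{n-1}$ and hence the claimed symplecticity.

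The main obstacle is the first step. Because \eqref{spRK} is an implicit scheme, the stage differentials $dp^{n,i},dq^{n,i}$ are only defined through a coupled linear system, so one must either fix the stage number (say $s=2$, as a concrete machine-checkable instance) and invert that system symbolically, or, for general $s$, keep the stage differentials as additional indeterminates and place their implicit defining equations into $I_{\mathrm H}$; in the latter case the correctness of the argument rests on choosing a monomial order under which those relations genuinely determine the normal form rather than leaving a spurious residue. A secondary but real subtlety is the faithful polynomial encoding of the exterior algebra: antisymmetry and the vanishing of repeated factors must be imposed so that a zero normal form certifies the two-form identity and is not an artefact of the ordering; the cleanest safeguard is to pass to components in the two-dimensional span of $\{dp^{n-1},dq^{n-1}\}$, where every two-form is a scalar multiple of $dp^{n-1}\wedge dq^{n-1}$ and the computation is automatically finite.
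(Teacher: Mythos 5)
Your proposal follows essentially the same route as the paper: differentiate the scheme, treat all the partial derivatives (together with the Hessian entries, the method coefficients, $h$ and $\Delta B_n$) as indeterminates, reduce the symplecticity defect to a normal form with respect to a Gr\"obner basis of the scheme-derived relations, and then check that this middle expression reduces to zero with respect to a Gr\"obner basis of the symplectic conditions \eqref{spRKsc}; the paper simply replaces your wedge-product bookkeeping by the equivalent scalar Jacobian condition \eqref{32}, which is exactly the ``pass to components'' safeguard you describe. The only discrepancy is the predicted shape of the middle expression --- in the paper's computation it emerges as a rational function $Poly/(Poly+1)$ whose numerator is a long polynomial lying in the ideal generated by \eqref{spRKsc} rather than a unit-coefficient combination of its generators --- but this does not affect the final reduction to zero.
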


\section{The Gr\"obner Basis}

\qquad The Grobner basis technology effectively calculates a well-functioning set of generators according to an any set of generators of polynomial ideal. The above well-functioning set of generators is called the Gr\"obner basis. The Gr\"obner basis can be used to judge whether any polynomial belongs to the ideal and can deals with many complex calculation problems. It has now become a fundamental tool in the theory of computer algebra, commutative algebra and algebraic geometry mechanical proof.
\begin{definition}\cite[(3.1)Definition]{cox}\label{defi1}
Fix a monomial order $>$ on polynomial ring of indeterminates $x_1,...,x_n$ with coefficients in number field $K$: $K\left[x_1,...,x_n\right]$, and let $I\subset K\left[x_1,...,x_n\right]$ be an ideal. A {\bf Gr\"obner basis} for $I$ is a finite collection of polynomials $G=\{g_1,...,g_n\}\subset I$ with the property that for every nonzero $f\in I$, the leading term of $f$ is reduced by the leading term of $g_i$ for some $i$.
\end{definition}

Let monomial order $>$ and let $I\subset K\left[x_1,...,x_n\right]$ be an ideal. Due to the uniqueness of remainders with respect to(w.r.t.) Gr\"obner basis, reduction(division) of $f\in K\left[x_1,...,x_n\right]$ by a Gr\"obner basis for $I$ produces an expression $f=g+r$ where $g\in I$ and no term in $r$ is reducible(divisible) by any leading term of element in $I$. If $f=g'+r'$ is any other such expression, then $r=r'$.

\begin{proposition}\cite{cox,chen1}\label{pro1}
If $G$ is a Gr\"obner basis for $I$, then for any $f\in I$, the remainder on reduction(division) of $f$ w.r.t. $G$ is zero. The remainder is called {\bf the normal form} of $f$ w.r.t. $G$.
\end{proposition}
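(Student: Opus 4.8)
The plan is to run the multivariate division algorithm of $f$ by the ordered set $G=\{g_1,\dots,g_n\}$ and then confront the algorithm's stopping condition with the defining property of a Gr\"obner basis from Definition~\ref{defi1}. First I would invoke the division algorithm in $K[x_1,\dots,x_n]$ with respect to the fixed monomial order $>$: it produces quotients $q_1,\dots,q_n$ and a remainder $r$ with $f=q_1g_1+\cdots+q_ng_n+r$, where, by construction, no monomial occurring in $r$ is divisible by any of the leading terms $\mathrm{LT}(g_1),\dots,\mathrm{LT}(g_n)$. Termination of the algorithm is guaranteed because $>$ is a well-order on monomials.

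Next I would observe that $r$ lies in $I$. Indeed, $r=f-\sum_{i=1}^{n}q_ig_i$, and since $f\in I$ by hypothesis while each $g_i\in G\subset I$, the right-hand side belongs to $I$; hence $r\in I$.

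The crucial step is to show $r=0$. Suppose, for contradiction, that $r\neq 0$. Then $r$ is a nonzero element of $I$, so by the Gr\"obner basis property in Definition~\ref{defi1} its leading term $\mathrm{LT}(r)$ is divisible by $\mathrm{LT}(g_i)$ for some $i$. But $\mathrm{LT}(r)$ is in particular a monomial occurring in $r$, and we arranged that no monomial of $r$ is divisible by any $\mathrm{LT}(g_i)$ — a contradiction. Therefore $r=0$, i.e., the remainder on reduction of $f$ by $G$ vanishes. Finally, the uniqueness of the remainder recorded just before the statement shows this outcome is independent of the order in which the $g_i$ are used, so the (zero) remainder is well-defined and is what we call the normal form of $f$ w.r.t.\ $G$.

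I do not expect a genuine obstacle: the argument is a direct clash between the division algorithm's terminal condition and the Gr\"obner basis property. The only point requiring care is making precise that the division algorithm terminates and returns a remainder none of whose monomials is divisible by a leading term of $G$; this is the standard multivariate division statement and rests on the well-ordering of monomials under $>$ (equivalently, on Dickson's lemma).
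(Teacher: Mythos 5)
Your argument is correct and is precisely the standard proof: divide $f$ by $G$, observe $r=f-\sum_i q_ig_i\in I$, and note that a nonzero $r$ would have its leading term divisible by some $\mathrm{LT}(g_i)$ by Definition~\ref{defi1}, contradicting the division algorithm's stopping condition. The paper itself states this proposition without proof, citing the references, and the paragraph preceding it records exactly the division-with-remainder and uniqueness facts you invoke, so there is no divergence to report.
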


\section{Mechanical Proving the  Symplecticity of PRK Methods }\hskip\parindent
This section is devoted to proving the symplectic conditions of PRK methods by using of the Gr\"obner basis technology\cite{Mishra,Buch,chen1,wu,wu2,wu3}.

Proving the symplecticity of PRK methods \eqref{bpRK} and \eqref{spRK} are both sufficient to prove 
\begin{align*}
&\frac{\partial(p^n,q^n)}{\partial(p^{n-1},q^{n-1})}^\top J\frac{\partial(p^n,q^n)}{\partial(p^{n-1},q^{n-1})}\\
=&\left[
\begin{array}{cc}\displaystyle
-\frac{d q^n}{d p^{n-1}}^\top\frac{d p^n}{d p^{n-1}}+\frac{d p^n}{d p^{n-1}}^\top\frac{d q^n}{d p^{n-1}}   & \displaystyle-\frac{d q^n}{d p^{n-1}}^\top\frac{d p^n}{d q^{n-1}}+\frac{d p^n}{d p^{n-1}}^\top\frac{d q^n}{d q^{n-1}}\\
\displaystyle-\frac{d q^n}{d q^{n-1}}^\top\frac{d p^n}{d p^{n-1}}+\frac{d p^n}{d q^{n-1}}^\top\frac{d q^n}{d p^{n-1}}   & \displaystyle-\frac{d q^n}{d q^{n-1}}^\top\frac{d p^n}{d q^{n-1}}+\frac{d p^n}{d q^{n-1}}^\top\frac{d q^n}{d q^{n-1}}
\end{array}
\right]\qquad\forall n\geq 1.\\
=&J,
\end{align*}
 It is clear that 
\begin{equation*}
-\frac{d q^n}{d p^{n-1}}\frac{d p^n}{d p^{n-1}}+\frac{d p^n}{d p^{n-1}}\frac{d q^n}{d p^{n-1}} =0,\qquad
-\frac{d q^n}{d q^{n-1}}\frac{d p^n}{d q^{n-1}}+\frac{d p^n}{d q^{n-1}}\frac{d q^n}{d q^{n-1}}=0. 
\end{equation*}
We only need to prove 
\begin{equation}\label{32}
-\frac{d q^n}{d q^{n-1}}\frac{d p^n}{d p^{n-1}}+\frac{d p^n}{d q^{n-1}}\frac{d q^n}{d p^{n-1}} +1=0.
\end{equation}
\begin{theorem}The s-stage PRK methods \eqref{bpRK} with symplectic conditions \eqref{sc1}-\eqref{sc2} for determinate Hamiltonian system \eqref{dHs} are symplectic.
\end{theorem}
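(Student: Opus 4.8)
The plan is to turn the symplecticity assertion, already reduced in the discussion preceding the statement to the single scalar identity \eqref{32}, into a statement of ideal membership and verify it with the Gr\"obner-basis / normal-form machinery of Proposition \ref{pro1}.

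First I would generate the polynomial data. Differentiating the two internal-stage equations of \eqref{bpRK} with respect to $p^{n-1}$ and to $q^{n-1}$ by the chain rule yields, at each stage $i$, relations among the internal stage derivatives $\partial p^{n,i}/\partial p^{n-1}$, $\partial q^{n,i}/\partial p^{n-1}$, $\partial p^{n,i}/\partial q^{n-1}$, $\partial q^{n,i}/\partial q^{n-1}$, the derivatives $\xi_i,\eta_i,\tilde\xi_i,\tilde\eta_i$ of the stage arguments $P_i=p^{n-1}+h\sum_j a_{ij}p^{n,j}$ and $Q_i=q^{n-1}+h\sum_j\hat a_{ij}q^{n,j}$, the second partials $H_{pp},H_{pq},H_{qq}$ evaluated at $(P_i,Q_i)$, the step size $h$, and the coefficients $a_{ij},\hat a_{ij},b_i,\hat b_i$. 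Every one of these quantities is marked as an indeterminate over $\mathbb{Q}$, and I impose the single structural fact that makes the argument close, namely the Hessian symmetry $H_{pq}(P_i,Q_i)=H_{qp}(P_i,Q_i)$. Together with the relations defining $\xi_i,\eta_i,\tilde\xi_i,\tilde\eta_i$ in terms of the internal stage derivatives, this produces a finite polynomial set whose ideal I call $I_1$. Differentiating the update formulas for $p^n,q^n$ expresses the four entries of $\partial(p^n,q^n)/\partial(p^{n-1},q^{n-1})$ as explicit polynomials in the same indeterminates; substituting these into the left side of \eqref{32} gives a single polynomial $F$. Finally, let $I_2$ be the ideal generated by the symplectic-condition polynomials $b_i-\hat b_i$ and $b_i\hat a_{ij}+\hat b_j a_{ji}-b_i\hat b_j$ of \eqref{sc1}--\eqref{sc2}.

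The aim is to certify $F\in I_1+I_2$, which I do by the two-stage reduction described in the introduction. Fix a monomial order (a block order placing the stage derivatives and Hessian entries above the coefficients is natural), compute a Gr\"obner basis $G_1$ of $I_1$, and reduce $F$ modulo $G_1$; the resulting normal form $\widetilde F$ is the ``middle expression''. Once the Hamiltonian relations have been used, $\widetilde F$ is (up to sign) a representative of $h\sum_i(\hat b_i-b_i)B_i m_i+h^2\sum_{i,j}(b_i\hat b_j-b_i\hat a_{ij}-\hat b_j a_{ji})(u_iz_j-w_iv_j)$, with $u_i,v_i,w_i,z_i$ the stage derivatives, $B_i=H_{pq}(P_i,Q_i)$, and $m_i=\xi_i\tilde\eta_i-\eta_i\tilde\xi_i$ --- in particular every monomial of $\widetilde F$ is a multiple of a generator of $I_2$. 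Computing a Gr\"obner basis $G_2$ of $I_2$ and reducing $\widetilde F$ modulo $G_2$ then returns $0$; since $\mathrm{NF}_{G_2}(\widetilde F)=0$ says $\widetilde F\in I_2$ and $F-\widetilde F\in I_1$, we get $F\in I_1+I_2$. Because the Jacobian entries of the actual method satisfy the relations of $I_1$ identically, and satisfy those of $I_2$ whenever \eqref{sc1}--\eqref{sc2} hold, this forces \eqref{32}, which by the reduction preceding the statement is exactly symplecticity.

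The real obstacle is not this cancellation but the size of the computation, together with the fact that ``arbitrary $s$'' cannot be a single run of a computer-algebra system: an $s$-stage method carries on the order of $s^2$ coefficient indeterminates and $O(s)$ stage-derivative and Hessian indeterminates, and Gr\"obner-basis computation is doubly exponential in the worst case. What rescues the general statement is the locality of the obstruction. After using the stage-argument relations of $I_1$, the difference between $F$ and the ``middle expression'' above collapses into a \emph{sum over single stages} of quantities built only from stage-$i$ data together with the factor $\hat b_i-b_i$; each such summand lies in the stage-local subideal of $I_1$ generated by $u_i+B_i\xi_i+C_i\eta_i$, $v_i-A_i\xi_i-B_i\eta_i$, $w_i+B_i\tilde\xi_i+C_i\tilde\eta_i$, $z_i-A_i\tilde\xi_i-B_i\tilde\eta_i$ (a one-line check once the substitutions are made, using $A_i=H_{pp}(P_i,Q_i)$, $C_i=H_{qq}(P_i,Q_i)$). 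It therefore suffices to run the Gr\"obner reduction on a generic single stage and a generic pair of stages --- problems of fixed size independent of $s$ --- and conclude for all $s$ by linearity; in practice one also verifies the fully assembled computation for $s=1,2,3$ as a check. One minor point needs care throughout: in \eqref{bpRK} the symbols $p^{n,i},q^{n,i}$ denote the stage \emph{slopes} rather than stage values, so the chain rule must be propagated through the implicit equations in exactly the form above.
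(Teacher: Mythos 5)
Your proposal follows the same overall route as the paper: differentiate the scheme \eqref{bpRK} to obtain the linear relations \eqref{31}, regard them as polynomial generators of an ideal in the partial-derivative indeterminates, reduce the symplecticity defect \eqref{32} first modulo a Gr\"obner basis of those relations and then modulo a Gr\"obner basis of the symplectic conditions \eqref{sc1}--\eqref{sc2}, and conclude from the vanishing of the final normal form. You depart from the paper in two substantive ways, both to your advantage. First, the paper carries out the computation only for $s=2$ ``as an example'' and leaves the general-$s$ claim unaddressed; your observation that the middle expression decomposes into stage-local pieces, each a multiple of $\hat b_i-b_i$ or of $b_i\hat a_{ij}+\hat b_j a_{ji}-b_i\hat b_j$, so that a fixed-size Gr\"obner computation plus linearity covers all $s$, is exactly what is needed to turn a finite computer run into a proof of the stated theorem --- in effect you reconstruct the classical hand argument and use the Gr\"obner machinery only to certify the stage-local identities. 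Second, you explicitly adjoin the Hessian symmetry $H_{pq}(P^{n,i},Q^{n,i})=H_{qp}(P^{n,i},Q^{n,i})$ to the ideal; the paper does not, and its own appendix shows this is not optional: the numerator of \eqref{md} contains the terms $-H_{pq}^{1}\hat b_1-H_{pq}^{2}\hat b_2+H_{qp}^{1}b_1+H_{qp}^{2}b_2$, which under $b_i=\hat b_i$ alone reduce to $\sum_i b_i\bigl(H_{qp}^{i}-H_{pq}^{i}\bigr)$ and vanish only once the mixed partials are identified, so the symmetry relation must be among the generators (or the two symbols must be one indeterminate) for the reduction to $0$ to succeed. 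The one soft spot in your write-up is that the stage-locality step is asserted rather than proved; you should exhibit the explicit cofactors expressing $F-\widetilde F$ in terms of the stage-$i$ generators of $I_1$. With that supplied, your plan is sound and, unlike the paper's $s=2$ computation, actually establishes the theorem for arbitrary $s$.
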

\begin{proof}
By denoting
\begin{equation*}
P^{n,i}:=p^{n-1}+h\sum_{j=1}^{s}a_{ij}p^{n,j},\quad Q^{n,i}:=q^{n-1}+h\sum_{j=1}^{s}\hat{a}_{ij}q^{n,j},
\end{equation*}
and from \eqref{bpRK}, we obtain
\begin{subequations}\label{31}
\begin{align}
&\frac{d p^n}{d p^{n-1}}=1+h\sum_{i=1}^s b_i \frac{d p^{n,i}}{d p^{n-1}},\quad \frac{d p^n}{d q^{n-1}}=h\sum_{i=1}^s b_i \frac{d p^{n,i}}{d q^{n-1}},\\
&\frac{d q^n}{d p^{n-1}}=h\sum_{i=1}^s \hat b_i \frac{d q^{n,i}}{d p^{n-1}},\quad \frac{d q^n}{d q^{n-1}}=1+h\sum_{i=1}^s \hat b_i \frac{d q^{n,i}}{d q^{n-1}},\\
&\frac{d p^{n,i}}{d p^{n-1}}=-H_{qp}^i\frac{d P^{n,i}}{d p^{n-1}}-H_{qq}^i\frac{d Q^{n,i}}{d p^{n-1}},\quad \frac{d p^{n,i}}{d q^{n-1}}=-H_{qp}^i\frac{d P^{n,i}}{d q^{n-1}}-H_{qq}^i\frac{d Q^{n,i}}{d q^{n-1}},\quad i=1,2,\dots,s,\\
&\frac{d q^{n,i}}{d p^{n-1}}=H_{pp}^i\frac{d P^{n,i}}{d p^{n-1}}+H_{pq}^i\frac{d Q^{n,i}}{d p^{n-1}},\quad \frac{d q^{n,i}}{d q^{n-1}}=H_{pp}^i\frac{d P^{n,i}}{d q^{n-1}}+H_{pq}^i\frac{d Q^{n,i}}{d q^{n-1}},\quad i=1,2,\dots,s,\\
&\frac{d P^{n,i}}{d p^{n-1}}=1+h\sum_{j=1}^s a_{ij}\frac{d p^{n,i}}{d p^{n-1}},\quad \frac{d P^{n,i}}{d q^{n-1}}=h\sum_{j=1}^s a_{ij}\frac{d p^{n,i}}{d q^{n-1}},\quad i=1,2,\dots,s,\\
&\frac{d Q^{n,i}}{d p^{n-1}}=h\sum_{j=1}^s\hat a_{ij}\frac{d q^{n,i}}{d p^{n-1}},\quad \frac{d Q^{n,i}}{d q^{n-1}}=1+h\sum_{j=1}^s \hat a_{ij}\frac{d q^{n,i}}{d q^{n-1}},\quad i=1,2,\dots,s.
\end{align}
\end{subequations}
where $H_{qp}(P^{n,i},Q^{n,i})$ is written as $H_{qp}^i$ and another second partial derivatives of $H$ are written in similar way. Equations \eqref{31} can be regarded as polynomials with indeterminates $\displaystyle\frac{d p^n}{d p^{n-1}},\frac{d p^n}{d q^{n-1}},\frac{d q^n}{d p^{n-1}},\frac{d q^n}{d q^{n-1}},\frac{d p^{n,i}}{d p^{n-1}}$, $\displaystyle\frac{d p^{n,i}}{d q^{n-1}},\frac{d q^{n,i}}{d p^{n-1}},\frac{d q^{n,i}}{d q^{n-1}},\frac{d P^{n,i}}{d p^{n-1}},\frac{d P^{n,i}}{d q^{n-1}},\frac{d Q^{n,i}}{d p^{n-1}},\frac{d Q^{n,i}}{d q^{n-1}}$. We take $s=2$ as an example. Let the indeterminates order be $\displaystyle\frac{d p^n}{d p^{n-1}}>\frac{d p^n}{d q^{n-1}}>\frac{d q^n}{d p^{n-1}}>\frac{d q^n}{d q^{n-1}}>\frac{d P^{n,2}}{d p^{n-1}}>\frac{d P^{n,1}}{d p^{n-1}}>\frac{d P^{n,2}}{d q^{n-1}}>\frac{d P^{n,1}}{d q^{n-1}}>\frac{d Q^{n,2}}{d p^{n-1}}>\frac{d Q^{n,1}}{d p^{n-1}}>\frac{d Q^{n,2}}{d q^{n-1}}>\frac{d Q^{n,1}}{d q^{n-1}}>\frac{d p^{n,2}}{d p^{n-1}}>\frac{d p^{n,1}}{d p^{n-1}}>\frac{d p^{n,2}}{d q^{n-1}}>\frac{d p^{n,1}}{d q^{n-1}}>\frac{d q^{n,2}}{d p^{n-1}}>\frac{d q^{n,1}}{d p^{n-1}}>\frac{d q^{n,2}}{d q^{n-1}}>\frac{d q^{n,1}}{d q^{n-1}}$, and the term order be lexicographical order. 

By utilizing \textsc{Maple} software, we compute the Gr\"obner basis of eqautions \eqref{31} and the normal form of  the left side of \eqref{32}. We write the Gr\"obner basis of eqautions \eqref{31} as $G$, and the normal form of  the left side of \eqref{32} w.r.t. $G$ as $g_{G}$. Because the number of indeterminates equals to the number of polynomials \eqref{31}, and all the polynomials are linear for indeterminates, according to the Gr\"obner basis theory, $g_{G}$ must have no any indeterminate, that is to say $g_{G}$ must be some formula about $a_{ij},b_i,\hat{a}_{ij},\hat{b}_i, \  i,j=1,2.$ We have $g_{G}$ of form:
\begin{equation}
\frac{Poly(a_{ij},b_i)}{Poly(a_{ij},b_i)+1}, \quad i,j=1,2,\label{md}
\end{equation}
where $Poly(a_{ij},b_i)$ represents some polynomials of coefficients $a_{ij},b_i,\hat{a}_{ij},\hat{b}_i, \  i,j=1,2,$ without content term. $g_{G}$ is pretty lengthiness and is showed in the appendix. 

Then by utilizing \textsc{Maple} software, we compute the Gr\"obner basis(written as $S$) of the left sides of symplectic conditions \eqref{sc1}-\eqref{sc2}, where the indeterminates order is $\hat b_2>\hat b_1>b_2>b_1>\hat a_{22}>\hat a_{21}>\hat a_{12}>\hat a_{11}>a_{22}>a_{21}>a_{12}>a_{11}$ and the term order is lexicographical order. Then, we compute the normal form of  the numerator and the dominator in \eqref{md} w.r.t. the above Gr\"obner basis $S$ respectively. We get  that the normal form of the numerator in \eqref{md} is $0$, and the normal form of the dominator in  \eqref{md} is $1$, which means 
\begin{equation}\label{end}
-\frac{d q^n}{d q^{n-1}}\frac{d p^n}{d p^{n-1}}+\frac{d p^n}{d q^{n-1}}\frac{d q^n}{d p^{n-1}} +1\xlongequal{reduced\ w.r.t.\ G}g_{G}\xlongequal{reduced\ w.r.t.\ S}\frac{0}{1}.
\end{equation}

According  to the definition \ref{defi1} and proposition \ref{pro1}, the normal form of a certain polynomial w.r.t. the Gr\"obner basis of polynomials set \eqref{31}(or \eqref{sc1}-\eqref{sc2}) is zero if and only if this polynomial belong to the ideal generated by \eqref{31}(or  \eqref{sc1}-\eqref{sc2}). Then, \eqref{end} means that $-\frac{d q^n}{d q^{n-1}}\frac{d p^n}{d p^{n-1}}+\frac{d p^n}{d q^{n-1}}\frac{d q^n}{d p^{n-1}} +1$ equals  $0$ after substituting \eqref{34} and  \eqref{sc1}-\eqref{sc2} into it. Thus, we prove that the PRK methods with symplectic conditions for determinate Hamiltonian system are symplectic.
\end{proof}

\begin{theorem}The stochastic s-stage PRK methods \eqref{spRK} with symplectic conditions \eqref{spRKsc} for stochastic Hamiltonian system \eqref{sHs} are symplectic.
\end{theorem}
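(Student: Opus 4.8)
The plan is to transfer the mechanical procedure used in the proof of the preceding theorem to the stochastic scheme \eqref{spRK}. Two structural differences must be accommodated: the stage values $p^{n,i},q^{n,i}$ now serve directly as the arguments of $H$ and $\tilde H$, so no auxiliary variables $P^{n,i},Q^{n,i}$ are required; and the second derivatives of $\tilde H$ appear alongside those of $H$, together with the two noise-related coefficient families $\alpha_{ij},\hat\alpha_{ij},\beta_i,\hat\beta_i$ and the additional parameter $\Delta B_n$ next to $h$.

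First I would differentiate each line of \eqref{spRK} with respect to $p^{n-1}$ and with respect to $q^{n-1}$. Writing $H_{qp}^i$ for $H_{qp}(p^{n,i},q^{n,i})$, $\tilde H_{qp}^i$ for $\tilde H_{qp}(p^{n,i},q^{n,i})$, and the other second derivatives in the same way, this produces relations such as
\begin{multline*}
\frac{d p^{n,i}}{d p^{n-1}}=1-h\sum_{j=1}^{s}a_{ij}\Bigl(H_{qp}^{j}\frac{d p^{n,j}}{d p^{n-1}}+H_{qq}^{j}\frac{d q^{n,j}}{d p^{n-1}}\Bigr)\\
-\Delta B_n\sum_{j=1}^{s}\alpha_{ij}\Bigl(\tilde H_{qp}^{j}\frac{d p^{n,j}}{d p^{n-1}}+\tilde H_{qq}^{j}\frac{d q^{n,j}}{d p^{n-1}}\Bigr),\qquad i=1,2,\dots,s,
\end{multline*}
together with the analogous equations for $\frac{d p^{n,i}}{d q^{n-1}},\frac{d q^{n,i}}{d p^{n-1}},\frac{d q^{n,i}}{d q^{n-1}}$ and, using $b_i,\hat b_i,\beta_i,\hat\beta_i$ in place of the stage coefficients (plus a $+1$ on the two diagonal ones), for $\frac{d p^{n}}{d p^{n-1}},\frac{d p^{n}}{d q^{n-1}},\frac{d q^{n}}{d p^{n-1}},\frac{d q^{n}}{d q^{n-1}}$. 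I regard these $4+4s$ relations as polynomials in the $4+4s$ indeterminates $\frac{d p^{n}}{d p^{n-1}},\dots,\frac{d q^{n,i}}{d q^{n-1}}$, treating $h,\Delta B_n$, the method coefficients, and the Hessian entries $H_{pp}^i,H_{pq}^i,H_{qq}^i,\tilde H_{pp}^i,\tilde H_{pq}^i,\tilde H_{qq}^i$ as coefficients (equivalently, as indeterminates placed below all the $\frac{d\cdot}{d\cdot}$ in an elimination order). Since the number of relations equals the number of indeterminates and every relation is affine in them, the system has a unique rational solution; hence, taking $s=2$ and a lexicographic order as in the deterministic case, the normal form $g_G$ of the left-hand side of \eqref{32} with respect to a Gr\"obner basis $G$ of these relations carries none of the $\frac{d\cdot}{d\cdot}$ indeterminates and is a rational function $\mathrm{num}/\mathrm{den}$ of the method coefficients (and a priori of $h,\Delta B_n$ and the Hessian entries, much of which the deterministic computation suggests will cancel).

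Next I would compute a Gr\"obner basis $S$ of the left-hand sides of the symplectic conditions \eqref{spRKsc} in the variables $a_{ij},\hat a_{ij},\alpha_{ij},\hat\alpha_{ij},b_i,\hat b_i,\beta_i,\hat\beta_i$ with lexicographic order, and reduce $\mathrm{num}$ and $\mathrm{den}$ modulo $S$, regarding $h,\Delta B_n$ and the Hessian entries as additional indeterminates of lowest priority so that ideal membership is tested coefficientwise in their monomials. The claim — to be confirmed in \textsc{Maple} exactly as in \eqref{end} — is that $\mathrm{num}$ reduces to $0$ and $\mathrm{den}$ to $1$, so that
\[
-\frac{d q^n}{d q^{n-1}}\frac{d p^n}{d p^{n-1}}+\frac{d p^n}{d q^{n-1}}\frac{d q^n}{d p^{n-1}}+1\xlongequal{\,\mathrm{mod}\ G\,}g_G\xlongequal{\,\mathrm{mod}\ S\,}\frac{0}{1}.
\]
By Definition \ref{defi1} and Proposition \ref{pro1}, this shows the symplectic expression belongs to the ideal generated by the differentiated relations together with \eqref{spRKsc}, i.e.\ it vanishes whenever \eqref{spRK} and \eqref{spRKsc} hold; combined with the two diagonal identities noted before the preceding theorem, this yields $\frac{\partial(p^n,q^n)}{\partial(p^{n-1},q^{n-1})}^{\top}J\frac{\partial(p^n,q^n)}{\partial(p^{n-1},q^{n-1})}=J$, hence symplecticity.

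I expect the main obstacle to be computational rather than conceptual: relative to the deterministic case the ring now carries a second Hessian, two extra coefficient families, and the extra parameter $\Delta B_n$, so $G$ and the subsequent reductions grow substantially, and choosing a monomial order under which the computation terminates in acceptable time is the practical crux. A secondary point requiring care is the bookkeeping of powers of $h$ and $\Delta B_n$ in $\mathrm{num}$: organizing it by degree in $h$ and $\Delta B_n$, the first four blocks of \eqref{spRKsc} are precisely what is needed to kill the degree-two part (the $h^2$, $h\Delta B_n$, and $(\Delta B_n)^2$ components), while $b_i=\hat b_i$ and $\beta_i=\hat\beta_i$ dispose of the remaining part; one must verify that the reduction modulo $S$ genuinely uses all of these and does not rely on a relation absent from \eqref{spRKsc}.
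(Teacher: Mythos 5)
Your proposal is correct and follows essentially the same route as the paper: differentiate the scheme, regard the resulting affine relations as polynomials in the partial-derivative indeterminates, reduce the symplectic residual \eqref{32} to a rational function of the coefficients via a Gr\"obner basis, and then reduce numerator and denominator modulo a Gr\"obner basis of the symplectic conditions \eqref{spRKsc} to obtain $0/1$. The only difference is cosmetic: the paper retains auxiliary indeterminates for the derivatives of $H_q(p^{n,i},q^{n,i})$, $\tilde H_q(p^{n,i},q^{n,i})$, $H_p(p^{n,i},q^{n,i})$, $\tilde H_p(p^{n,i},q^{n,i})$ instead of inlining the chain rule, which enlarges the linear system but yields the same elimination ideal and the same normal form.
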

\begin{proof}
Let
\begin{equation}\label{344}
\begin{aligned}
&P^{n,i}=H_q(p^{n,i},q^{n,i}),\qquad \tilde{P}^{n,i}=\tilde{H}_q(p^{n,i},q^{n,i}),\\
&Q^{n,i}=H_p(p^{n,i},q^{n,i}),\qquad \tilde{Q}^{n,i}=\tilde{H}_p(p^{n,i},q^{n,i}).
\end{aligned}
\end{equation}
From \eqref{344} and \eqref{spRK} we obtain\\

\begin{subequations}\label{34}
\begin{align}
&\frac{d p^n}{d p^{n-1}}=1-h\sum_{i=1}^s b_i \frac{d P^{n,i}}{d p^{n-1}}-\Delta B_n\sum_{i=1}^s \beta_{i} \frac{d \tilde{P}^{n,i}}{d p^{n-1}},\quad
\frac{d p^n}{d q^{n-1}}=-h\sum_{i=1}^s b_i \frac{d P^{n,i}}{d q^{n-1}}-\Delta B_n\sum_{i=1}^s \beta_{i} \frac{d \tilde{P}^{n,i}}{d q^{n-1}},\\
&\frac{d q^n}{d p^{n-1}}=h\sum_{i=1}^s \hat b_i \frac{d Q^{n,i}}{d p^{n-1}}+\Delta B_n\sum_{i=1}^s\hat{\beta}_i\frac{d \tilde{Q}^{n,i}}{d p^{n-1}},\quad
\frac{d q^n}{d q^{n-1}}=1+h\sum_{i=1}^s \hat b_i \frac{d Q^{n,i}}{d q^{n-1}}+\Delta B_n\sum_{i=1}^s\hat{\beta}_i\frac{d \tilde{Q}^{n,i}}{d q^{n-1}},\\
&\frac{d p^{n,i}}{d p^{n-1}}=1-h\sum_{j=1}^s a_{ij} \frac{d P^{n,j}}{d p^{n-1}}-\Delta B_n\sum_{j=1}^s \alpha_{ij}\frac{d \tilde{P}^{n,j}}{d p^{n-1}},\quad 
\frac{d p^{n,i}}{d q^{n-1}}=-h\sum_{j=1}^s a_{ij} \frac{d P^{n,j}}{d q^{n-1}}-\Delta B_n\sum_{j=1}^s \alpha_{ij}\frac{d \tilde{P}^{n,j}}{d q^{n-1}}\nonumber\\
&\qquad\qquad\qquad\qquad\qquad\qquad\qquad\qquad\qquad\qquad\qquad\qquad\qquad\qquad\qquad\qquad\qquad\qquad\quad i=1,2,\dots,s,\\
&\frac{d q^{n,i}}{d p^{n-1}}=h\sum_{j=1}^s \hat a_{ij} \frac{d Q^{n,j}}{d p^{n-1}}+\Delta B_n\sum_{j=1}^s \hat\alpha_{ij} \frac{d \tilde Q^{n,j}}{d p^{n-1}},\quad 
\frac{d q^{n,i}}{d q^{n-1}}=1+h\sum_{j=1}^s \hat a_{ij} \frac{d Q^{n,j}}{d q^{n-1}}+\Delta B_n\sum_{j=1}^s \hat\alpha_{ij} \frac{d \tilde Q^{n,j}}{d q^{n-1}}\nonumber\\
&\qquad\qquad\qquad\qquad\qquad\qquad\qquad\qquad\qquad\qquad\qquad\qquad\qquad\qquad\qquad\qquad\qquad\qquad\quad i=1,2,\dots,s,\\
&\frac{d P^{n,i}}{d p^{n-1}}=H_{qp}^i\frac{d p^{n,i}}{d p^{n-1}}+H_{qq}^i\frac{d q^{n,i}}{d p^{n-1}},\quad 
\frac{d P^{n,i}}{d q^{n-1}}=H_{qp}^i\frac{d p^{n,i}}{d q^{n-1}}+H_{qq}^i\frac{d q^{n,i}}{d q^{n-1}},\quad
\frac{d \tilde P^{n,i}}{d p^{n-1}}=\tilde H_{qp}^i\frac{d p^{n,i}}{d p^{n-1}}+\tilde H_{qq}^i\frac{d q^{n,i}}{d p^{n-1}},\nonumber\\
&\qquad\qquad\qquad\qquad\qquad\qquad\qquad\qquad\qquad\qquad\qquad\qquad\qquad\qquad\qquad\qquad\qquad\qquad\quad i=1,2,\dots,s,\\
&\frac{d \tilde P^{n,i}}{d q^{n-1}}=\tilde H_{qp}^i\frac{d p^{n,i}}{d q^{n-1}}+\tilde H_{qq}^i\frac{d q^{n,i}}{d q^{n-1}},\quad
\frac{d Q^{n,i}}{d p^{n-1}}=H_{pp}^i\frac{d p^{n,i}}{d p^{n-1}}+H_{pq}^i\frac{d q^{n,i}}{d p^{n-1}},\quad
\frac{d Q^{n,i}}{d q^{n-1}}=H_{pp}^i\frac{d p^{n,i}}{d q^{n-1}}+H_{pq}^i\frac{d q^{n,i}}{d q^{n-1}},\nonumber\\
&\qquad\qquad\qquad\qquad\qquad\qquad\qquad\qquad\qquad\qquad\qquad\qquad\qquad\qquad\qquad\qquad\qquad\qquad\quad i=1,2,\dots,s,\\
&\frac{d \tilde Q^{n,i}}{d p^{n-1}}=\tilde H_{pp}^i\frac{d p^{n,i}}{d p^{n-1}}+\tilde H_{pq}^i\frac{d q^{n,i}}{d p^{n-1}},\quad
\frac{d \tilde Q^{n,i}}{d q^{n-1}}=\tilde H_{pp}^i\frac{d p^{n,i}}{d q^{n-1}}+\tilde H_{pq}^i\frac{d q^{n,i}}{d q^{n-1}},\quad
\qquad\qquad\quad i=1,2,\dots,s.
\end{align}
\end{subequations}
where $H_{qp}(p^{n,i},q^{n,i})$ is written as $H_{qp}^i$ and another second partial derivatives of $H$ and $\tilde{H}$ are written in similar way. Equations \eqref{34} can be regarded as polynomials with indeterminates $\displaystyle\frac{d p^n}{d p^{n-1}},\frac{d p^n}{d q^{n-1}},\frac{d q^n}{d p^{n-1}},\frac{d q^n}{d q^{n-1}},\frac{d p^{n,i}}{d p^{n-1}}$, $\displaystyle\frac{d p^{n,i}}{d q^{n-1}},\frac{d q^{n,i}}{d p^{n-1}},\frac{d q^{n,i}}{d q^{n-1}},\frac{d P^{n,i}}{d p^{n-1}},\frac{d P^{n,i}}{d q^{n-1}},\frac{d Q^{n,i}}{d p^{n-1}},\frac{d Q^{n,i}}{d q^{n-1}},\frac{d \tilde P^{n,i}}{d p^{n-1}},\frac{d \tilde P^{n,i}}{d q^{n-1}},\frac{d \tilde Q^{n,i}}{d p^{n-1}},\frac{d \tilde Q^{n,i}}{d q^{n-1}}$. We take $s=2$ as an example. Let the indeterminates order be $\displaystyle\frac{d p^{n,2}}{d p^{n-1}}>\frac{d p^{n,1}}{d p^{n-1}}>\frac{d p^{n,2}}{d q^{n-1}}>\frac{d p^{n,1}}{d q^{n-1}}>\frac{d q^{n,2}}{d p^{n-1}}>\frac{d q^{n,1}}{d p^{n-1}}>\frac{d q^{n,2}}{d q^{n-1}}>\frac{d q^{n,1}}{d q^{n-1}}>\frac{d P^{n,1}}{d p^{n-1}}>\frac{d P^{n,2}}{d p^{n-1}}>\frac{d P^{n,1}}{d q^{n-1}}>\frac{d P^{n,2}}{d q^{n-1}}>\frac{d Q^{n,1}}{d p^{n-1}}>\frac{d Q^{n,2}}{d p^{n-1}}>\frac{d Q^{n,1}}{d q^{n-1}}>\frac{d Q^{n,2}}{d q^{n-1}}>\frac{d \tilde P^{n,1}}{d p^{n-1}}>\frac{d \tilde P^{n,2}}{d p^{n-1}}>\frac{d \tilde P^{n,1}}{d q^{n-1}}>\frac{d \tilde P^{n,2}}{d q^{n-1}}>\frac{d \tilde Q^{n,1}}{d p^{n-1}}>\frac{d \tilde Q^{n,2}}{d p^{n-1}}>\frac{d \tilde Q^{n,1}}{d q^{n-1}}>\frac{d \tilde Q^{n,2}}{d q^{n-1}}>\frac{d p^n}{d p^{n-1}}>\frac{d p^n}{d q^{n-1}}>\frac{d q^n}{d p^{n-1}}>\frac{d q^n}{d q^{n-1}}$, and the term order be lexicographical order. 

By utilizing \textsc{Maple} software, we compute the Gr\"obner basis of eqautions \eqref{34} and the normal form of  the left side of \eqref{32}. We write the Gr\"obner basis of eqautions \eqref{34} as $\bar{G}$, and the normal form of   the left side of \eqref{32} w.r.t.  $\bar{G}$ as $g_{\bar{G}}$.  We notice that the number of indeterminates equals to the number of polynomials in \eqref{31}, and all the polynomials are linear for indeterminates, for the same reason in the case for the determinate Hamiltonian systems, $g_{\bar G}$ must be some formula about $a_{ij},\alpha_{ij},b_i,\beta_{i},\hat{a}_{ij},\hat{\alpha}_{ij},\hat{b}_i,\hat{\beta}_{i}, \  i,j=1,2.$ We also have $g_{\bar{G}}$ of form:
\begin{equation}
\frac{Poly(a_{ij},\alpha_{ij},b_i,\beta_{i})}{Poly(a_{ij},\alpha_{ij},b_i,\beta_{i})+1}, \quad i,j=1,2,\label{smd}
\end{equation}
where $Poly(a_{ij},\alpha_{ij},b_i,\beta_{i})$ represents some polynomials of coefficients $a_{ij},\alpha_{ij},b_i,\beta_{i},\hat{a}_{ij},\hat{\alpha}_{ij},\hat{b}_i,\hat{\beta}_{i}, \  i,j=1,2,$ without content term. $g_{\bar{G}}$ is five times as long as $g_{G}$, so we don't show it in this paper.

Then by utilizing \textsc{Maple} software, we compute the Gr\"obner basis(written as $\bar{S}$) of the left sides of symplectic conditions \eqref{spRKsc}, where the indeterminates order is $\beta_2>b_2>\beta_1>b_1>\hat{\beta}_2>\hat b_2>\hat{\beta}_1>\hat b_1>\alpha_{22}>a_{22}>\alpha_{21}>a_{21}>\alpha_{12}>a_{12}>\alpha_{11}>a_{11}>\hat\alpha_{22}>\hat a_{22}>\hat\alpha_{21}>\hat a_{21}>\hat \alpha_{12}>\hat a_{12}>\hat \alpha_{11}>\hat a_{11}$ and the term order is lexicographical order. Then, we compute the normal form of  the numerator and the dominator in \eqref{smd} w.r.t. the above Gr\"obner basis $\bar{S}$ respectively. We get that the normal form of the numerator in \eqref{smd} is $0$, and the normal form of the dominator in  \eqref{smd} is $1$, which means 
\begin{equation}\label{end1}
-\frac{d q^n}{d q^{n-1}}\frac{d p^n}{d p^{n-1}}+\frac{d p^n}{d q^{n-1}}\frac{d q^n}{d p^{n-1}} +1\xlongequal{reduced\ w.r.t.\ \bar G}g_{\bar G}\xlongequal{reduced\ w.r.t.\ \bar S}0.
\end{equation}

According  to the definition \ref{defi1} and proposition \ref{pro1}, the normal form of a certain polynomial w.r.t. the Gr\"obner basis of set of polynomials \eqref{34}(or \eqref{spRKsc}) is zero if and only if this polynomial belong to the ideal generated by \eqref{34}(or \eqref{spRKsc}). Then, \eqref{end1} means that $-\frac{d q^n}{d q^{n-1}}\frac{d p^n}{d p^{n-1}}+\frac{d p^n}{d q^{n-1}}\frac{d q^n}{d p^{n-1}} +1$ equals $0$ after substituting \eqref{34} and \eqref{spRKsc} into it. Thus, we prove that the stochastic PRK methods with symplectic conditions for stochastic Hamiltonian system are symplectic.
 
\end{proof}

\section{Conclusions and Prospects}\hskip\parindent
In this paper, we respectively prove that the PRK methods with symplectic conditions for determinate Hamiltonian system and stochastic Hamiltonian system are symplectic by using Gr\"obner basis technology. By simple introduction of the Gr\"obner basis, we explain the function in the mechanical proving the symplecticity of PRK methods.

The future works are about the application of the Gr\"obner basis theory and the character set theory to the mechanical proving the structure-preservation laws for another numerical methods. As we all knows, in the common way, the energy conservation law or momentum conservation law of numerical methods are given by substituting numerical schemes into the discrete energy expression or discrete momentum expression. It is similar with in this paper for the symplectic structure conservation law. So we also can apply mechanization methods of theorem-proving to the proving the energy conservation law and momentum conservation law. It will help us to deal with complex or unproved conclusions.
 
\section{Acknowledgments}

\qquad This work is supported by Youth Foundation of Shandong Natural Science Foundation. (No. ZR2024QA201)

\appendixpage
\begin{equation*}
\begin{aligned}
g_G=&h(h^3H_{pp}^{1}H_{pp}^{2}H_{qq}^{1}H_{qq}^{2}a_{11}a_{22}\hat{a}_{11}\hat{b}_2 - h^3H_{pp}^{1}H_{pp}^{2}H_{qq}^{1}H_{qq}^{2}a_{11}a_{22}\hat{a}_{12}\hat{b}_1 - h^3H_{pp}^{1}H_{pp}^{2}H_{qq}^{1}H_{qq}^{2}a_{11}a_{22}\hat{a}_{21}\hat{b}_2\\
& + h^3H_{pp}^{1}H_{pp}^{2}H_{qq}^{1}H_{qq}^{2}a_{11}a_{22}\hat{a}_{22}\hat{b}_1 + h^3H_{pp}^{1}H_{pp}^{2}H_{qq}^{1}H_{qq}^{2}a_{11}\hat{a}_{11}\hat{a}_{22}b_2 - h^3H_{pp}^{1}H_{pp}^{2}H_{qq}^{1}H_{qq}^{2}a_{11}\hat{a}_{11}b_2\hat{b}_2 \\
&- h^3H_{pp}^{1}H_{pp}^{2}H_{qq}^{1}H_{qq}^{2}a_{11}\hat{a}_{12}\hat{a}_{21}b_2 + h^3H_{pp}^{1}H_{pp}^{2}H_{qq}^{1}H_{qq}^{2}a_{11}\hat{a}_{12}b_2\hat{b}_1 + h^3H_{pp}^{1}H_{pp}^{2}H_{qq}^{1}H_{qq}^{2}a_{11}\hat{a}_{21}b_2\hat{b}_2 \\
&- h^3H_{pp}^{1}H_{pp}^{2}H_{qq}^{1}H_{qq}^{2}a_{11}\hat{a}_{22}b_2\hat{b}_1 - h^3H_{pp}^{1}H_{pp}^{2}H_{qq}^{1}H_{qq}^{2}a_{12}a_{21}\hat{a}_{11}\hat{b}_2 + h^3H_{pp}^{1}H_{pp}^{2}H_{qq}^{1}H_{qq}^{2}a_{12}a_{21}\hat{a}_{12}\hat{b}_1\\
& + h^3H_{pp}^{1}H_{pp}^{2}H_{qq}^{1}H_{qq}^{2}a_{12}a_{21}\hat{a}_{21}\hat{b}_2 - h^3H_{pp}^{1}H_{pp}^{2}H_{qq}^{1}H_{qq}^{2}a_{12}a_{21}\hat{a}_{22}\hat{b}_1 - h^3H_{pp}^{1}H_{pp}^{2}H_{qq}^{1}H_{qq}^{2}a_{12}\hat{a}_{11}\hat{a}_{22}b_1\\
& + h^3H_{pp}^{1}H_{pp}^{2}H_{qq}^{1}H_{qq}^{2}a_{12}\hat{a}_{11}b_1\hat{b}_2 + h^3H_{pp}^{1}H_{pp}^{2}H_{qq}^{1}H_{qq}^{2}a_{12}\hat{a}_{12}\hat{a}_{21}b_1 - h^3H_{pp}^{1}H_{pp}^{2}H_{qq}^{1}H_{qq}^{2}a_{12}\hat{a}_{12}b_1\hat{b}_1\\
& - h^3H_{pp}^{1}H_{pp}^{2}H_{qq}^{1}H_{qq}^{2}a_{12}\hat{a}_{21}b_1\hat{b}_2 + h^3H_{pp}^{1}H_{pp}^{2}H_{qq}^{1}H_{qq}^{2}a_{12}\hat{a}_{22}b_1\hat{b}_1 - h^3H_{pp}^{1}H_{pp}^{2}H_{qq}^{1}H_{qq}^{2}a_{21}\hat{a}_{11}\hat{a}_{22}b_2 \\
&+ h^3H_{pp}^{1}H_{pp}^{2}H_{qq}^{1}H_{qq}^{2}a_{21}\hat{a}_{11}b_2\hat{b}_2 + h^3H_{pp}^{1}H_{pp}^{2}H_{qq}^{1}H_{qq}^{2}a_{21}\hat{a}_{12}\hat{a}_{21}b_2 - h^3H_{pp}^{1}H_{pp}^{2}H_{qq}^{1}H_{qq}^{2}a_{21}\hat{a}_{12}b_2\hat{b}_1\\
& - h^3H_{pp}^{1}H_{pp}^{2}H_{qq}^{1}H_{qq}^{2}a_{21}\hat{a}_{21}b_2\hat{b}_2 + h^3H_{pp}^{1}H_{pp}^{2}H_{qq}^{1}H_{qq}^{2}a_{21}\hat{a}_{22}b_2\hat{b}_1 + h^3H_{pp}^{1}H_{pp}^{2}H_{qq}^{1}H_{qq}^{2}a_{22}\hat{a}_{11}\hat{a}_{22}b_1\\
& - h^3H_{pp}^{1}H_{pp}^{2}H_{qq}^{1}H_{qq}^{2}a_{22}\hat{a}_{11}b_1\hat{b}_2 - h^3H_{pp}^{1}H_{pp}^{2}H_{qq}^{1}H_{qq}^{2}a_{22}\hat{a}_{12}\hat{a}_{21}b_1 + h^3H_{pp}^{1}H_{pp}^{2}H_{qq}^{1}H_{qq}^{2}a_{22}\hat{a}_{12}b_1\hat{b}_1 \\
&+ h^3H_{pp}^{1}H_{pp}^{2}H_{qq}^{1}H_{qq}^{2}a_{22}\hat{a}_{21}b_1\hat{b}_2 - h^3H_{pp}^{1}H_{pp}^{2}H_{qq}^{1}H_{qq}^{2}a_{22}\hat{a}_{22}b_1\hat{b}_1 - h^3H_{pp}^{1}H_{pq}^{2}H_{qp}^{2}H_{qq}^{1}a_{11}a_{22}\hat{a}_{11}\hat{b}_2\\
& + h^3H_{pp}^{1}H_{pq}^{2}H_{qp}^{2}H_{qq}^{1}a_{11}a_{22}\hat{a}_{12}\hat{b}_1 + h^3H_{pp}^{1}H_{pq}^{2}H_{qp}^{2}H_{qq}^{1}a_{11}a_{22}\hat{a}_{21}\hat{b}_2 - h^3H_{pp}^{1}H_{pq}^{2}H_{qp}^{2}H_{qq}^{1}a_{11}a_{22}\hat{a}_{22}\hat{b}_1\\
& - h^3H_{pp}^{1}H_{pq}^{2}H_{qp}^{2}H_{qq}^{1}a_{11}\hat{a}_{11}\hat{a}_{22}b_2 + h^3H_{pp}^{1}H_{pq}^{2}H_{qp}^{2}H_{qq}^{1}a_{11}\hat{a}_{11}b_2\hat{b}_2 + h^3H_{pp}^{1}H_{pq}^{2}H_{qp}^{2}H_{qq}^{1}a_{11}\hat{a}_{12}\hat{a}_{21}b_2 \\
&- h^3H_{pp}^{1}H_{pq}^{2}H_{qp}^{2}H_{qq}^{1}a_{11}\hat{a}_{12}b_2\hat{b}_1 - h^3H_{pp}^{1}H_{pq}^{2}H_{qp}^{2}H_{qq}^{1}a_{11}\hat{a}_{21}b_2\hat{b}_2 + h^3H_{pp}^{1}H_{pq}^{2}H_{qp}^{2}H_{qq}^{1}a_{11}\hat{a}_{22}b_2\hat{b}_1 \\
&+ h^3H_{pp}^{1}H_{pq}^{2}H_{qp}^{2}H_{qq}^{1}a_{12}a_{21}\hat{a}_{11}\hat{b}_2 - h^3H_{pp}^{1}H_{pq}^{2}H_{qp}^{2}H_{qq}^{1}a_{12}a_{21}\hat{a}_{12}\hat{b}_1 - h^3H_{pp}^{1}H_{pq}^{2}H_{qp}^{2}H_{qq}^{1}a_{12}a_{21}\hat{a}_{21}\hat{b}_2 \\
&+ h^3H_{pp}^{1}H_{pq}^{2}H_{qp}^{2}H_{qq}^{1}a_{12}a_{21}\hat{a}_{22}\hat{b}_1 + h^3H_{pp}^{1}H_{pq}^{2}H_{qp}^{2}H_{qq}^{1}a_{12}\hat{a}_{11}\hat{a}_{22}b_1 - h^3H_{pp}^{1}H_{pq}^{2}H_{qp}^{2}H_{qq}^{1}a_{12}\hat{a}_{11}b_1\hat{b}_2\\
& - h^3H_{pp}^{1}H_{pq}^{2}H_{qp}^{2}H_{qq}^{1}a_{12}\hat{a}_{12}\hat{a}_{21}b_1 + h^3H_{pp}^{1}H_{pq}^{2}H_{qp}^{2}H_{qq}^{1}a_{12}\hat{a}_{12}b_1\hat{b}_1 + h^3H_{pp}^{1}H_{pq}^{2}H_{qp}^{2}H_{qq}^{1}a_{12}\hat{a}_{21}b_1\hat{b}_2 \\
&- h^3H_{pp}^{1}H_{pq}^{2}H_{qp}^{2}H_{qq}^{1}a_{12}\hat{a}_{22}b_1\hat{b}_1 + h^3H_{pp}^{1}H_{pq}^{2}H_{qp}^{2}H_{qq}^{1}a_{21}\hat{a}_{11}\hat{a}_{22}b_2 - h^3H_{pp}^{1}H_{pq}^{2}H_{qp}^{2}H_{qq}^{1}a_{21}\hat{a}_{11}b_2\hat{b}_2 \\
&- h^3H_{pp}^{1}H_{pq}^{2}H_{qp}^{2}H_{qq}^{1}a_{21}\hat{a}_{12}\hat{a}_{21}b_2 + h^3H_{pp}^{1}H_{pq}^{2}H_{qp}^{2}H_{qq}^{1}a_{21}\hat{a}_{12}b_2\hat{b}_1 + h^3H_{pp}^{1}H_{pq}^{2}H_{qp}^{2}H_{qq}^{1}a_{21}\hat{a}_{21}b_2\hat{b}_2 \\
&- h^3H_{pp}^{1}H_{pq}^{2}H_{qp}^{2}H_{qq}^{1}a_{21}\hat{a}_{22}b_2\hat{b}_1 - h^3H_{pp}^{1}H_{pq}^{2}H_{qp}^{2}H_{qq}^{1}a_{22}\hat{a}_{11}\hat{a}_{22}b_1 + h^3H_{pp}^{1}H_{pq}^{2}H_{qp}^{2}H_{qq}^{1}a_{22}\hat{a}_{11}b_1\hat{b}_2 \\
&+ h^3H_{pp}^{1}H_{pq}^{2}H_{qp}^{2}H_{qq}^{1}a_{22}\hat{a}_{12}\hat{a}_{21}b_1 - h^3H_{pp}^{1}H_{pq}^{2}H_{qp}^{2}H_{qq}^{1}a_{22}\hat{a}_{12}b_1\hat{b}_1 - h^3H_{pp}^{1}H_{pq}^{2}H_{qp}^{2}H_{qq}^{1}a_{22}\hat{a}_{21}b_1\hat{b}_2 \\
&+ h^3H_{pp}^{1}H_{pq}^{2}H_{qp}^{2}H_{qq}^{1}a_{22}\hat{a}_{22}b_1\hat{b}_1 - h^3H_{pp}^{2}H_{pq}^{1}H_{qp}^{1}H_{qq}^{2}a_{11}a_{22}\hat{a}_{11}\hat{b}_2 + h^3H_{pp}^{2}H_{pq}^{1}H_{qp}^{1}H_{qq}^{2}a_{11}a_{22}\hat{a}_{12}\hat{b}_1 \\
&+ h^3H_{pp}^{2}H_{pq}^{1}H_{qp}^{1}H_{qq}^{2}a_{11}a_{22}\hat{a}_{21}\hat{b}_2 - h^3H_{pp}^{2}H_{pq}^{1}H_{qp}^{1}H_{qq}^{2}a_{11}a_{22}\hat{a}_{22}\hat{b}_1 - h^3H_{pp}^{2}H_{pq}^{1}H_{qp}^{1}H_{qq}^{2}a_{11}\hat{a}_{11}\hat{a}_{22}b_2\\
& + h^3H_{pp}^{2}H_{pq}^{1}H_{qp}^{1}H_{qq}^{2}a_{11}\hat{a}_{11}b_2\hat{b}_2 + h^3H_{pp}^{2}H_{pq}^{1}H_{qp}^{1}H_{qq}^{2}a_{11}\hat{a}_{12}\hat{a}_{21}b_2 - h^3H_{pp}^{2}H_{pq}^{1}H_{qp}^{1}H_{qq}^{2}a_{11}\hat{a}_{12}b_2\hat{b}_1 \\
&- h^3H_{pp}^{2}H_{pq}^{1}H_{qp}^{1}H_{qq}^{2}a_{11}\hat{a}_{21}b_2\hat{b}_2 + h^3H_{pp}^{2}H_{pq}^{1}H_{qp}^{1}H_{qq}^{2}a_{11}\hat{a}_{22}b_2\hat{b}_1 + h^3H_{pp}^{2}H_{pq}^{1}H_{qp}^{1}H_{qq}^{2}a_{12}a_{21}\hat{a}_{11}\hat{b}_2\\
& - h^3H_{pp}^{2}H_{pq}^{1}H_{qp}^{1}H_{qq}^{2}a_{12}a_{21}\hat{a}_{12}\hat{b}_1 - h^3H_{pp}^{2}H_{pq}^{1}H_{qp}^{1}H_{qq}^{2}a_{12}a_{21}\hat{a}_{21}\hat{b}_2 + h^3H_{pp}^{2}H_{pq}^{1}H_{qp}^{1}H_{qq}^{2}a_{12}a_{21}\hat{a}_{22}\hat{b}_1 \\
&+ h^3H_{pp}^{2}H_{pq}^{1}H_{qp}^{1}H_{qq}^{2}a_{12}\hat{a}_{11}\hat{a}_{22}b_1 - h^3H_{pp}^{2}H_{pq}^{1}H_{qp}^{1}H_{qq}^{2}a_{12}\hat{a}_{11}b_1\hat{b}_2 - h^3H_{pp}^{2}H_{pq}^{1}H_{qp}^{1}H_{qq}^{2}a_{12}\hat{a}_{12}\hat{a}_{21}b_1 \\
&+ h^3H_{pp}^{2}H_{pq}^{1}H_{qp}^{1}H_{qq}^{2}a_{12}\hat{a}_{12}b_1\hat{b}_1 + h^3H_{pp}^{2}H_{pq}^{1}H_{qp}^{1}H_{qq}^{2}a_{12}\hat{a}_{21}b_1\hat{b}_2 - h^3H_{pp}^{2}H_{pq}^{1}H_{qp}^{1}H_{qq}^{2}a_{12}\hat{a}_{22}b_1\hat{b}_1 \\
&+ h^3H_{pp}^{2}H_{pq}^{1}H_{qp}^{1}H_{qq}^{2}a_{21}\hat{a}_{11}\hat{a}_{22}b_2 - h^3H_{pp}^{2}H_{pq}^{1}H_{qp}^{1}H_{qq}^{2}a_{21}\hat{a}_{11}b_2\hat{b}_2 - h^3H_{pp}^{2}H_{pq}^{1}H_{qp}^{1}H_{qq}^{2}a_{21}\hat{a}_{12}\hat{a}_{21}b_2 
\end{aligned}
\end{equation*}

\begin{equation*}
\begin{aligned}
&+ h^3H_{pp}^{2}H_{pq}^{1}H_{qp}^{1}H_{qq}^{2}a_{21}\hat{a}_{12}b_2\hat{b}_1 + h^3H_{pp}^{2}H_{pq}^{1}H_{qp}^{1}H_{qq}^{2}a_{21}\hat{a}_{21}b_2\hat{b}_2 - h^3H_{pp}^{2}H_{pq}^{1}H_{qp}^{1}H_{qq}^{2}a_{21}\hat{a}_{22}b_2\hat{b}_1 \\
&- h^3H_{pp}^{2}H_{pq}^{1}H_{qp}^{1}H_{qq}^{2}a_{22}\hat{a}_{11}\hat{a}_{22}b_1 + h^3H_{pp}^{2}H_{pq}^{1}H_{qp}^{1}H_{qq}^{2}a_{22}\hat{a}_{11}b_1\hat{b}_2 + h^3H_{pp}^{2}H_{pq}^{1}H_{qp}^{1}H_{qq}^{2}a_{22}\hat{a}_{12}\hat{a}_{21}b_1\\
& - h^3H_{pp}^{2}H_{pq}^{1}H_{qp}^{1}H_{qq}^{2}a_{22}\hat{a}_{12}b_1\hat{b}_1 - h^3H_{pp}^{2}H_{pq}^{1}H_{qp}^{1}H_{qq}^{2}a_{22}\hat{a}_{21}b_1\hat{b}_2 + h^3H_{pp}^{2}H_{pq}^{1}H_{qp}^{1}H_{qq}^{2}a_{22}\hat{a}_{22}b_1\hat{b}_1 \\
&+ h^3H_{pq}^{1}H_{pq}^{2}H_{qp}^{1}H_{qp}^{2}a_{11}a_{22}\hat{a}_{11}\hat{b}_2 - h^3H_{pq}^{1}H_{pq}^{2}H_{qp}^{1}H_{qp}^{2}a_{11}a_{22}\hat{a}_{12}\hat{b}_1 - h^3H_{pq}^{1}H_{pq}^{2}H_{qp}^{1}H_{qp}^{2}a_{11}a_{22}\hat{a}_{21}\hat{b}_2 \\
&+ h^3H_{pq}^{1}H_{pq}^{2}H_{qp}^{1}H_{qp}^{2}a_{11}a_{22}\hat{a}_{22}\hat{b}_1 + h^3H_{pq}^{1}H_{pq}^{2}H_{qp}^{1}H_{qp}^{2}a_{11}\hat{a}_{11}\hat{a}_{22}b_2 - h^3H_{pq}^{1}H_{pq}^{2}H_{qp}^{1}H_{qp}^{2}a_{11}\hat{a}_{11}b_2\hat{b}_2 \\
&- h^3H_{pq}^{1}H_{pq}^{2}H_{qp}^{1}H_{qp}^{2}a_{11}\hat{a}_{12}\hat{a}_{21}b_2 + h^3H_{pq}^{1}H_{pq}^{2}H_{qp}^{1}H_{qp}^{2}a_{11}\hat{a}_{12}b_2\hat{b}_1 + h^3H_{pq}^{1}H_{pq}^{2}H_{qp}^{1}H_{qp}^{2}a_{11}\hat{a}_{21}b_2\hat{b}_2 \\
&- h^3H_{pq}^{1}H_{pq}^{2}H_{qp}^{1}H_{qp}^{2}a_{11}\hat{a}_{22}b_2\hat{b}_1 - h^3H_{pq}^{1}H_{pq}^{2}H_{qp}^{1}H_{qp}^{2}a_{12}a_{21}\hat{a}_{11}\hat{b}_2 + h^3H_{pq}^{1}H_{pq}^{2}H_{qp}^{1}H_{qp}^{2}a_{12}a_{21}\hat{a}_{12}\hat{b}_1 \\
&+ h^3H_{pq}^{1}H_{pq}^{2}H_{qp}^{1}H_{qp}^{2}a_{12}a_{21}\hat{a}_{21}\hat{b}_2 - h^3H_{pq}^{1}H_{pq}^{2}H_{qp}^{1}H_{qp}^{2}a_{12}a_{21}\hat{a}_{22}\hat{b}_1 - h^3H_{pq}^{1}H_{pq}^{2}H_{qp}^{1}H_{qp}^{2}a_{12}\hat{a}_{11}\hat{a}_{22}b_1 \\
&+ h^3H_{pq}^{1}H_{pq}^{2}H_{qp}^{1}H_{qp}^{2}a_{12}\hat{a}_{11}b_1\hat{b}_2 + h^3H_{pq}^{1}H_{pq}^{2}H_{qp}^{1}H_{qp}^{2}a_{12}\hat{a}_{12}\hat{a}_{21}b_1 - h^3H_{pq}^{1}H_{pq}^{2}H_{qp}^{1}H_{qp}^{2}a_{12}\hat{a}_{12}b_1\hat{b}_1 \\
&- h^3H_{pq}^{1}H_{pq}^{2}H_{qp}^{1}H_{qp}^{2}a_{12}\hat{a}_{21}b_1\hat{b}_2 + h^3H_{pq}^{1}H_{pq}^{2}H_{qp}^{1}H_{qp}^{2}a_{12}\hat{a}_{22}b_1\hat{b}_1 - h^3H_{pq}^{1}H_{pq}^{2}H_{qp}^{1}H_{qp}^{2}a_{21}\hat{a}_{11}\hat{a}_{22}b_2 \\
&+ h^3H_{pq}^{1}H_{pq}^{2}H_{qp}^{1}H_{qp}^{2}a_{21}\hat{a}_{11}b_2\hat{b}_2 + h^3H_{pq}^{1}H_{pq}^{2}H_{qp}^{1}H_{qp}^{2}a_{21}\hat{a}_{12}\hat{a}_{21}b_2 - h^3H_{pq}^{1}H_{pq}^{2}H_{qp}^{1}H_{qp}^{2}a_{21}\hat{a}_{12}b_2\hat{b}_1 \\
&- h^3H_{pq}^{1}H_{pq}^{2}H_{qp}^{1}H_{qp}^{2}a_{21}\hat{a}_{21}b_2\hat{b}_2 + h^3H_{pq}^{1}H_{pq}^{2}H_{qp}^{1}H_{qp}^{2}a_{21}\hat{a}_{22}b_2\hat{b}_1 + h^3H_{pq}^{1}H_{pq}^{2}H_{qp}^{1}H_{qp}^{2}a_{22}\hat{a}_{11}\hat{a}_{22}b_1 \\
&- h^3H_{pq}^{1}H_{pq}^{2}H_{qp}^{1}H_{qp}^{2}a_{22}\hat{a}_{11}b_1\hat{b}_2 - h^3H_{pq}^{1}H_{pq}^{2}H_{qp}^{1}H_{qp}^{2}a_{22}\hat{a}_{12}\hat{a}_{21}b_1 + h^3H_{pq}^{1}H_{pq}^{2}H_{qp}^{1}H_{qp}^{2}a_{22}\hat{a}_{12}b_1\hat{b}_1 \\
&+ h^3H_{pq}^{1}H_{pq}^{2}H_{qp}^{1}H_{qp}^{2}a_{22}\hat{a}_{21}b_1\hat{b}_2 - h^3H_{pq}^{1}H_{pq}^{2}H_{qp}^{1}H_{qp}^{2}a_{22}\hat{a}_{22}b_1\hat{b}_1 - h^2H_{pp}^{1}H_{pq}^{2}H_{qq}^{1}a_{11}\hat{a}_{11}\hat{b}_2 \\
&+ h^2H_{pp}^{1}H_{pq}^{2}H_{qq}^{1}a_{11}\hat{a}_{12}\hat{b}_1 + h^2H_{pp}^{1}H_{pq}^{2}H_{qq}^{1}a_{11}\hat{a}_{21}\hat{b}_2 - h^2H_{pp}^{1}H_{pq}^{2}H_{qq}^{1}a_{11}\hat{a}_{22}\hat{b}_1 - h^2H_{pp}^{1}H_{pq}^{2}H_{qq}^{1}\hat{a}_{11}\hat{a}_{22}b_1 \\
&+ h^2H_{pp}^{1}H_{pq}^{2}H_{qq}^{1}\hat{a}_{11}b_1\hat{b}_2 + h^2H_{pp}^{1}H_{pq}^{2}H_{qq}^{1}\hat{a}_{12}\hat{a}_{21}b_1 - h^2H_{pp}^{1}H_{pq}^{2}H_{qq}^{1}\hat{a}_{12}b_1\hat{b}_1 - h^2H_{pp}^{1}H_{pq}^{2}H_{qq}^{1}\hat{a}_{21}b_1\hat{b}_2 \\
&+ h^2H_{pp}^{1}H_{pq}^{2}H_{qq}^{1}\hat{a}_{22}b_1\hat{b}_1 + h^2H_{pp}^{1}H_{qp}^{2}H_{qq}^{1}a_{11}a_{22}\hat{b}_1 + h^2H_{pp}^{1}H_{qp}^{2}H_{qq}^{1}a_{11}\hat{a}_{11}b_2 - h^2H_{pp}^{1}H_{qp}^{2}H_{qq}^{1}a_{11}b_2\hat{b}_1 \\
&- h^2H_{pp}^{1}H_{qp}^{2}H_{qq}^{1}a_{12}a_{21}\hat{b}_1 - h^2H_{pp}^{1}H_{qp}^{2}H_{qq}^{1}a_{12}\hat{a}_{11}b_1 + h^2H_{pp}^{1}H_{qp}^{2}H_{qq}^{1}a_{12}b_1\hat{b}_1 - h^2H_{pp}^{1}H_{qp}^{2}H_{qq}^{1}a_{21}\hat{a}_{11}b_2 \\
&+ h^2H_{pp}^{1}H_{qp}^{2}H_{qq}^{1}a_{21}b_2\hat{b}_1 + h^2H_{pp}^{1}H_{qp}^{2}H_{qq}^{1}a_{22}\hat{a}_{11}b_1 - h^2H_{pp}^{1}H_{qp}^{2}H_{qq}^{1}a_{22}b_1\hat{b}_1 - h^2H_{pp}^{2}H_{pq}^{1}H_{qq}^{2}a_{22}\hat{a}_{11}\hat{b}_2\\
& + h^2H_{pp}^{2}H_{pq}^{1}H_{qq}^{2}a_{22}\hat{a}_{12}\hat{b}_1 + h^2H_{pp}^{2}H_{pq}^{1}H_{qq}^{2}a_{22}\hat{a}_{21}\hat{b}_2 - h^2H_{pp}^{2}H_{pq}^{1}H_{qq}^{2}a_{22}\hat{a}_{22}\hat{b}_1 - h^2H_{pp}^{2}H_{pq}^{1}H_{qq}^{2}\hat{a}_{11}\hat{a}_{22}b_2 \\
&+ h^2H_{pp}^{2}H_{pq}^{1}H_{qq}^{2}\hat{a}_{11}b_2\hat{b}_2 + h^2H_{pp}^{2}H_{pq}^{1}H_{qq}^{2}\hat{a}_{12}\hat{a}_{21}b_2 - h^2H_{pp}^{2}H_{pq}^{1}H_{qq}^{2}\hat{a}_{12}b_2\hat{b}_1 - h^2H_{pp}^{2}H_{pq}^{1}H_{qq}^{2}\hat{a}_{21}b_2\hat{b}_2 \\
&+ h^2H_{pp}^{2}H_{pq}^{1}H_{qq}^{2}\hat{a}_{22}b_2\hat{b}_1 + h^2H_{pp}^{2}H_{qp}^{1}H_{qq}^{2}a_{11}a_{22}\hat{b}_2 + h^2H_{pp}^{2}H_{qp}^{1}H_{qq}^{2}a_{11}\hat{a}_{22}b_2 - h^2H_{pp}^{2}H_{qp}^{1}H_{qq}^{2}a_{11}b_2\hat{b}_2 \\
&- h^2H_{pp}^{2}H_{qp}^{1}H_{qq}^{2}a_{12}a_{21}\hat{b}_2 - h^2H_{pp}^{2}H_{qp}^{1}H_{qq}^{2}a_{12}\hat{a}_{22}b_1 + h^2H_{pp}^{2}H_{qp}^{1}H_{qq}^{2}a_{12}b_1\hat{b}_2 - h^2H_{pp}^{2}H_{qp}^{1}H_{qq}^{2}a_{21}\hat{a}_{22}b_2 \\
&+ h^2H_{pp}^{2}H_{qp}^{1}H_{qq}^{2}a_{21}b_2\hat{b}_2 + h^2H_{pp}^{2}H_{qp}^{1}H_{qq}^{2}a_{22}\hat{a}_{22}b_1 - h^2H_{pp}^{2}H_{qp}^{1}H_{qq}^{2}a_{22}b_1\hat{b}_2 + h^2H_{pq}^{1}H_{pq}^{2}H_{qp}^{1}a_{11}\hat{a}_{11}\hat{b}_2 \\
&- h^2H_{pq}^{1}H_{pq}^{2}H_{qp}^{1}a_{11}\hat{a}_{12}\hat{b}_1 - h^2H_{pq}^{1}H_{pq}^{2}H_{qp}^{1}a_{11}\hat{a}_{21}\hat{b}_2 + h^2H_{pq}^{1}H_{pq}^{2}H_{qp}^{1}a_{11}\hat{a}_{22}\hat{b}_1 + h^2H_{pq}^{1}H_{pq}^{2}H_{qp}^{1}\hat{a}_{11}\hat{a}_{22}b_1\\
& - h^2H_{pq}^{1}H_{pq}^{2}H_{qp}^{1}\hat{a}_{11}b_1\hat{b}_2 - h^2H_{pq}^{1}H_{pq}^{2}H_{qp}^{1}\hat{a}_{12}\hat{a}_{21}b_1 + h^2H_{pq}^{1}H_{pq}^{2}H_{qp}^{1}\hat{a}_{12}b_1\hat{b}_1 + h^2H_{pq}^{1}H_{pq}^{2}H_{qp}^{1}\hat{a}_{21}b_1\hat{b}_2 \\
&- h^2H_{pq}^{1}H_{pq}^{2}H_{qp}^{1}\hat{a}_{22}b_1\hat{b}_1 + h^2H_{pq}^{1}H_{pq}^{2}H_{qp}^{2}a_{22}\hat{a}_{11}\hat{b}_2 - h^2H_{pq}^{1}H_{pq}^{2}H_{qp}^{2}a_{22}\hat{a}_{12}\hat{b}_1 - h^2H_{pq}^{1}H_{pq}^{2}H_{qp}^{2}a_{22}\hat{a}_{21}\hat{b}_2\\
& + h^2H_{pq}^{1}H_{pq}^{2}H_{qp}^{2}a_{22}\hat{a}_{22}\hat{b}_1 + h^2H_{pq}^{1}H_{pq}^{2}H_{qp}^{2}\hat{a}_{11}\hat{a}_{22}b_2 - h^2H_{pq}^{1}H_{pq}^{2}H_{qp}^{2}\hat{a}_{11}b_2\hat{b}_2 - h^2H_{pq}^{1}H_{pq}^{2}H_{qp}^{2}\hat{a}_{12}\hat{a}_{21}b_2 \\
&+ h^2H_{pq}^{1}H_{pq}^{2}H_{qp}^{2}\hat{a}_{12}b_2\hat{b}_1 + h^2H_{pq}^{1}H_{pq}^{2}H_{qp}^{2}\hat{a}_{21}b_2\hat{b}_2 - h^2H_{pq}^{1}H_{pq}^{2}H_{qp}^{2}\hat{a}_{22}b_2\hat{b}_1 - h^2H_{pq}^{1}H_{qp}^{1}H_{qp}^{2}a_{11}a_{22}\hat{b}_1 \\
\end{aligned}
\end{equation*}

\begin{equation*}
\begin{aligned}
&- h^2H_{pq}^{1}H_{qp}^{1}H_{qp}^{2}a_{11}\hat{a}_{11}b_2 + h^2H_{pq}^{1}H_{qp}^{1}H_{qp}^{2}a_{11}b_2\hat{b}_1 + h^2H_{pq}^{1}H_{qp}^{1}H_{qp}^{2}a_{12}a_{21}\hat{b}_1 + h^2H_{pq}^{1}H_{qp}^{1}H_{qp}^{2}a_{12}\hat{a}_{11}b_1\\
& - h^2H_{pq}^{1}H_{qp}^{1}H_{qp}^{2}a_{12}b_1\hat{b}_1 + h^2H_{pq}^{1}H_{qp}^{1}H_{qp}^{2}a_{21}\hat{a}_{11}b_2 - h^2H_{pq}^{1}H_{qp}^{1}H_{qp}^{2}a_{21}b_2\hat{b}_1 - h^2H_{pq}^{1}H_{qp}^{1}H_{qp}^{2}a_{22}\hat{a}_{11}b_1\\
& + h^2H_{pq}^{1}H_{qp}^{1}H_{qp}^{2}a_{22}b_1\hat{b}_1 - h^2H_{pq}^{2}H_{qp}^{1}H_{qp}^{2}a_{11}a_{22}\hat{b}_2 - h^2H_{pq}^{2}H_{qp}^{1}H_{qp}^{2}a_{11}\hat{a}_{22}b_2 + h^2H_{pq}^{2}H_{qp}^{1}H_{qp}^{2}a_{11}b_2\hat{b}_2 \\
&+ h^2H_{pq}^{2}H_{qp}^{1}H_{qp}^{2}a_{12}a_{21}\hat{b}_2 + h^2H_{pq}^{2}H_{qp}^{1}H_{qp}^{2}a_{12}\hat{a}_{22}b_1 - h^2H_{pq}^{2}H_{qp}^{1}H_{qp}^{2}a_{12}b_1\hat{b}_2 + h^2H_{pq}^{2}H_{qp}^{1}H_{qp}^{2}a_{21}\hat{a}_{22}b_2\\
& - h^2H_{pq}^{2}H_{qp}^{1}H_{qp}^{2}a_{21}b_2\hat{b}_2 - h^2H_{pq}^{2}H_{qp}^{1}H_{qp}^{2}a_{22}\hat{a}_{22}b_1 + h^2H_{pq}^{2}H_{qp}^{1}H_{qp}^{2}a_{22}b_1\hat{b}_2 + hH_{pp}^{1}H_{qq}^{1}a_{11}\hat{b}_1  \\
&- hH_{pp}^{1}H_{qq}^{1}b_1\hat{b}_1 + hH_{pp}^{1}H_{qq}^{2}a_{12}\hat{b}_1 + hH_{pp}^{1}H_{qq}^{2}\hat{a}_{21}b_2 - hH_{pp}^{1}H_{qq}^{2}b_2\hat{b}_1 + hH_{pp}^{2}H_{qq}^{1}a_{21}\hat{b}_2 + hH_{pp}^{2}H_{qq}^{1}\hat{a}_{12}b_1 \\
&- hH_{pp}^{2}H_{qq}^{1}b_1\hat{b}_2 + hH_{pp}^{2}H_{qq}^{2}a_{22}\hat{b}_2 + hH_{pp}^{2}H_{qq}^{2}\hat{a}_{22}b_2 - hH_{pp}^{2}H_{qq}^{2}b_2\hat{b}_2 + hH_{pq}^{1}H_{pq}^{2}\hat{a}_{11}\hat{b}_2 - hH_{pq}^{1}H_{pq}^{2}\hat{a}_{12}\hat{b}_1\\
& - hH_{pq}^{1}H_{pq}^{2}\hat{a}_{21}\hat{b}_2 + hH_{pq}^{1}H_{pq}^{2}\hat{a}_{22}\hat{b}_1 - hH_{pq}^{1}H_{qp}^{1}a_{11}\hat{b}_1 - hH_{pq}^{1}H_{qp}^{1}\hat{a}_{11}b_1 + hH_{pq}^{1}H_{qp}^{1}b_1\hat{b}_1 - hH_{pq}^{1}H_{qp}^{2}a_{22}\hat{b}_1\\
& - hH_{pq}^{1}H_{qp}^{2}\hat{a}_{11}b_2 + hH_{pq}^{1}H_{qp}^{2}b_2\hat{b}_1 - hH_{pq}^{2}H_{qp}^{1}a_{11}\hat{b}_2 - hH_{pq}^{2}H_{qp}^{1}\hat{a}_{22}b_1 + hH_{pq}^{2}H_{qp}^{1}b_1\hat{b}_2 - hH_{pq}^{2}H_{qp}^{2}a_{22}\hat{b}_2\\
& - hH_{pq}^{2}H_{qp}^{2}\hat{a}_{22}b_2 + hH_{pq}^{2}H_{qp}^{2}b_2\hat{b}_2 + hH_{qp}^{1}H_{qp}^{2}a_{11}b_2 - hH_{qp}^{1}H_{qp}^{2}a_{12}b_1 - hH_{qp}^{1}H_{qp}^{2}a_{21}b_2 + hH_{qp}^{1}H_{qp}^{2}a_{22}b_1 \\
&- H_{pq}^{1}\hat{b}_1 - H_{pq}^{2}\hat{b}_2 + H_{qp}^{1}b_1 + H_{qp}^{2}b_2+ hH_{pp}^{1}H_{qq}^{1}\hat{a}_{11}b_1)\\
&/(h^4H_{pp}^{1}H_{pp}^{2}H_{qq}^{1}H_{qq}^{2}a_{11}a_{22}\hat{a}_{11}\hat{a}_{22} - h^4H_{pp}^{1}H_{pp}^{2}H_{qq}^{1}H_{qq}^{2}a_{11}a_{22}\hat{a}_{12}\hat{a}_{21} - h^4H_{pp}^{1}H_{pp}^{2}H_{qq}^{1}H_{qq}^{2}a_{12}a_{21}\hat{a}_{11}\hat{a}_{22} \\
&+ h^4H_{pp}^{1}H_{pp}^{2}H_{qq}^{1}H_{qq}^{2}a_{12}a_{21}\hat{a}_{12}\hat{a}_{21} - h^4H_{pp}^{1}H_{pq}^{2}H_{qp}^{2}H_{qq}^{1}a_{11}a_{22}\hat{a}_{11}\hat{a}_{22} + h^4H_{pp}^{1}H_{pq}^{2}H_{qp}^{2}H_{qq}^{1}a_{11}a_{22}\hat{a}_{12}\hat{a}_{21} \\
&+ h^4H_{pp}^{1}H_{pq}^{2}H_{qp}^{2}H_{qq}^{1}a_{12}a_{21}\hat{a}_{11}\hat{a}_{22} - h^4H_{pp}^{1}H_{pq}^{2}H_{qp}^{2}H_{qq}^{1}a_{12}a_{21}\hat{a}_{12}\hat{a}_{21} - h^4H_{pp}^{2}H_{pq}^{1}H_{qp}^{1}H_{qq}^{2}a_{11}a_{22}\hat{a}_{11}\hat{a}_{22} \\
&+ h^4H_{pp}^{2}H_{pq}^{1}H_{qp}^{1}H_{qq}^{2}a_{11}a_{22}\hat{a}_{12}\hat{a}_{21} + h^4H_{pp}^{2}H_{pq}^{1}H_{qp}^{1}H_{qq}^{2}a_{12}a_{21}\hat{a}_{11}\hat{a}_{22} - h^4H_{pp}^{2}H_{pq}^{1}H_{qp}^{1}H_{qq}^{2}a_{12}a_{21}\hat{a}_{12}\hat{a}_{21} \\
&+ h^4H_{pq}^{1}H_{pq}^{2}H_{qp}^{1}H_{qp}^{2}a_{11}a_{22}\hat{a}_{11}\hat{a}_{22} - h^4H_{pq}^{1}H_{pq}^{2}H_{qp}^{1}H_{qp}^{2}a_{11}a_{22}\hat{a}_{12}\hat{a}_{21} - h^4H_{pq}^{1}H_{pq}^{2}H_{qp}^{1}H_{qp}^{2}a_{12}a_{21}\hat{a}_{11}\hat{a}_{22} \\
&+ h^4H_{pq}^{1}H_{pq}^{2}H_{qp}^{1}H_{qp}^{2}a_{12}a_{21}\hat{a}_{12}\hat{a}_{21} - h^3H_{pp}^{1}H_{pq}^{2}H_{qq}^{1}a_{11}\hat{a}_{11}\hat{a}_{22} + h^3H_{pp}^{1}H_{pq}^{2}H_{qq}^{1}a_{11}\hat{a}_{12}\hat{a}_{21} \\
&+ h^3H_{pp}^{1}H_{qp}^{2}H_{qq}^{1}a_{11}a_{22}\hat{a}_{11} - h^3H_{pp}^{1}H_{qp}^{2}H_{qq}^{1}a_{12}a_{21}\hat{a}_{11} - h^3H_{pp}^{2}H_{pq}^{1}H_{qq}^{2}a_{22}\hat{a}_{11}\hat{a}_{22} + h^3H_{pp}^{2}H_{pq}^{1}H_{qq}^{2}a_{22}\hat{a}_{12}\hat{a}_{21} \\
&+ h^3H_{pp}^{2}H_{qp}^{1}H_{qq}^{2}a_{11}a_{22}\hat{a}_{22} - h^3H_{pp}^{2}H_{qp}^{1}H_{qq}^{2}a_{12}a_{21}\hat{a}_{22} + h^3H_{pq}^{1}H_{pq}^{2}H_{qp}^{1}a_{11}\hat{a}_{11}\hat{a}_{22} - h^3H_{pq}^{1}H_{pq}^{2}H_{qp}^{1}a_{11}\hat{a}_{12}\hat{a}_{21}\\
& + h^3H_{pq}^{1}H_{pq}^{2}H_{qp}^{2}a_{22}\hat{a}_{11}\hat{a}_{22} - h^3H_{pq}^{1}H_{pq}^{2}H_{qp}^{2}a_{22}\hat{a}_{12}\hat{a}_{21} - h^3H_{pq}^{1}H_{qp}^{1}H_{qp}^{2}a_{11}a_{22}\hat{a}_{11} + h^3H_{pq}^{1}H_{qp}^{1}H_{qp}^{2}a_{12}a_{21}\hat{a}_{11}\\
& - h^3H_{pq}^{2}H_{qp}^{1}H_{qp}^{2}a_{11}a_{22}\hat{a}_{22} + h^3H_{pq}^{2}H_{qp}^{1}H_{qp}^{2}a_{12}a_{21}\hat{a}_{22} + h^2H_{pp}^{1}H_{qq}^{1}a_{11}\hat{a}_{11} + h^2H_{pp}^{1}H_{qq}^{2}a_{12}\hat{a}_{21}  \\
&+ h^2H_{pp}^{2}H_{qq}^{2}a_{22}\hat{a}_{22} + h^2H_{pq}^{1}H_{pq}^{2}\hat{a}_{11}\hat{a}_{22} - h^2H_{pq}^{1}H_{pq}^{2}\hat{a}_{12}\hat{a}_{21} - h^2H_{pq}^{1}H_{qp}^{1}a_{11}\hat{a}_{11} - h^2H_{pq}^{1}H_{qp}^{2}a_{22}\hat{a}_{11} \\
&- h^2H_{pq}^{2}H_{qp}^{1}a_{11}\hat{a}_{22} - h^2H_{pq}^{2}H_{qp}^{2}a_{22}\hat{a}_{22} + h^2H_{qp}^{1}H_{qp}^{2}a_{11}a_{22} - h^2H_{qp}^{1}H_{qp}^{2}a_{12}a_{21} - hH_{pq}^{1}\hat{a}_{11} - hH_{pq}^{2}\hat{a}_{22} \\
& + hH_{qp}^{1}a_{11}+ hH_{qp}^{2}a_{22} + h^2H_{pp}^{2}H_{qq}^{1}a_{21}\hat{a}_{12}+ 1)
\end{aligned}
\end{equation*}

\begin{thebibliography}{}
\bibitem{geometric}E. Hairer, C. Lubich, G.Wanner. Geometric Numerical Integration. Structure-Preserving Algorithms for Ordinary Differential Equations, \textit{Springer}, 2016.

\bibitem{sungeng}G. Sun. Symplectic partitioned Runge--Kutta methods. \textit{ J. Comput. Math.}, 1993, 11, 365-372.

 \bibitem{maq}
Q. Ma, D. Ding and X. Ding. Symplectic conditions and stochastic generating functions of stochastic Runge--Kutta methods for stochastic Hamiltonian systems with multiplicative noise. \textit{Appl. Math. Comput.}, 2012, 219(2), 635-643.

\bibitem{maq2} 
Q. Ma and X. Ding. Stochastic symplectic partitioned Runge--Kutta methods for stochastic Hamiltonian systems with multiplicative noise. \textit{Appl. Math. Comput.}, 2015, 252(2), 520-534.

\bibitem{hong}
J. Hong, H. Liu, and G. Sun. The multi-symplecticity of partitioned Runge--Kutta methods for Hamiltonian PDEs. \textit{Math. Comp.}, 2006, 75, 167-181.

\bibitem{feng}
K. Feng. On difference schemes and symplectic geometry. \textit{Proceedings of the 5-th Intern.
Symposium on differential geometry \& differential equations, Beijing}, 1985: 42-58. 

\bibitem{ruth}
R. D. Ruth. A canonical integration technique, \textit{IEEE Transactions on Nuclear Science NS-30}, 1983, 2669-2671.


\bibitem{sanz}
J. M. Sanz-Serna. Runge--Kutta schemes for Hamiltonian systems, \textit{BIT}, 1988, 28, 877-883.

\bibitem{suris}
Y. B. Suris, On the conservation of the symplectic structure in the numerical solution of
Hamiltonian systems (in Russian), \textit{In: Numerical Solution of Ordinary Differential Equations, ed. S.S. Filippov, Keldysh Institute of Applied Mathematics, USSR Academy of Sciences}, 1988, 148-160. 


\bibitem{lasagni}
F. M. Lasagni, Canonical Runge--Kutta methods. \textit{Zeitschrift F\"ur Angewandte Mathematik Und Physik}, 1988, 39, 952-953.

\bibitem{milstein}
G. N. Milstein, Y. M. Repin and M. V. Tretyakov, Symplectic Integration of Hamiltonian Systems with Additive Noise, \textit{SIAM J. Numer. Anal.}, 2002, 39(6), 2066-2088.

\bibitem{milstein2}
G. N. Milstein, Y. M. Repin and M. V. Tretyakov, Numerical Methods for Stochastic Systems Preserving Symplectic Structure. \textit{SIAM J. Numer. Anal.}, 2002, 40(4), 1583-1604.

\bibitem{WLJ}
L. Wang. Ph.d thesis: Variational Integrators and Generating Functions for Stochastic Hamiltonian Systems. \textit{Karlsruhe Institute of Tchnology, KIT Scientific Publishing}, 2007


\bibitem{hong2}
L. Wang, J.Hong, R.Scherer and F.Bai. Dynamics and variational integrators of stochastic Hamiltonian systems. \textit{Int. J. Numer. Anal. Mod.}, 2009, 6(4), 586-602

\bibitem{hong3}
L. Wang, J.Hong. Generating functions for stochastic symplectic methods. \textit{Discrete Contin. Dyn. Syst.}, 2014, 34(3):1211-1228. https://doi.org/10.3934/dcds.2014.34.1211

\bibitem{cox}
D A. Cox, J. Little,D. O’Shea. Using algebraic geometry. \textit{Springer}, 2005.

\bibitem{Mishra}
B. Mishra. Algorithmic Algebra. \textit{New York: Springer-Verlag}, 1993.

\bibitem{Buch}
B. Buchberger. Bruno Buchberger's PhD thesis 1965: An algorithm for finding the basis elements of the residue class ring of a zero-dimensional polynomial ideal. \textit{J. Symb. Comput.}, 2006, 41, 475-511.

\bibitem{chen1}
Y. Chen, Z. Zhang. Computer Algebra.(in chinese).\textit{Science Press}, 2020.


\bibitem{wu}
W. Wu. Mechanical Theorem-Proving in Elementary. Geometry and Differential Geometry. \textit{in Proc. 1980 Beijing DD-Symp., Science Press},1982,  2,1073-1092.

\bibitem{wu2}
W. Wu. On The Mechanization of Theorem-Proving in Elementary Differential Geometry. \textit{Science in China Ser A}, 1979, 94-102.

\bibitem{wu3}
W. Wu, X. Shan. Mathematics mechanization and applications after thirty years. \textit{Front. Comput. Sci. China}, 2007, 1, 1-8.


%
%
%
%
%
%
\end{thebibliography}
\end{document}